\def\supp{\mathop{\rm supp}\nolimits}
\newtheorem{theorem}{Theorem}[section]
\newtheorem{proposition}[theorem]{Proposition}
\newtheorem{corollary}[theorem]{Corollary}
\newtheorem{definition}[theorem]{Definition}
\newtheorem{remark}[theorem]{Remark}
\newtheorem{example}[theorem]{Example}
\renewenvironment{proof}[1][.]{%
\bigskip\noindent{\bf Proof#1 }}{%
\hfill$\blacksquare$\bigskip}
\begin{document}
\title{Continuous groupoids on the symbolic space, quasi-invariant probabilities for Haar systems and the Haar-Ruelle operator}



\author{Artur O. Lopes and Elismar R. Oliveira}
\date{}
\maketitle

\noindent\rule{\textwidth}{0.1mm}
\begin{abstract}
We consider groupoids on $\{1,2,..,d\}^\mathbb{N}$, cocycles and  the counting measure as transverse function.
We  generalize  results  relating  quasi-invariant probabilities with eigenprobabilities for the dual of the Ruelle operator. We assume a mild compatibility of the groupoid with the symbolic structure.
We present a   generalization of the Ruelle operator - the Haar-Ruelle operator - taking into account the Haar structure. We consider continuous and also  H\"older cocycles.  IFS with weights appears in our reasoning in the H\"older case.

\end{abstract}
\vspace {0.1mm}
\noindent\rule{\textwidth}{0.1mm}

\tableofcontents

\vspace {0.1mm}

\section{Introduction}

The symbolic space $X=\{1,2,...,d\}^{\mathbb{N}}$ is a compact metric space for the usual metric $d$ described on \cite{PP}.  The  shift $\sigma$ acts on this space as a continuous transformation.

We introduce an equivalence relation $\sim$ in $X$, such that, the map
$x \to [x]:=\{s \in X | s \sim x\}$
is hemicontinuous, as a multivalued function (see \cite{AB}, Section 17.2, for details on hemicontinuity and nets).

\begin{definition} Given an equivalence relation $\sim$ we consider the associated groupoid
$$G=\{\,\,(x,y) \in X^2 \,\,\text{such that} \,\, x \sim y\,\,\},$$
and, as usual we denote $G^0=\{\,\,(x,x) \in G \,\,\text{such that}\,\,  x \in X \,\,\,\}\cong X$.
\end{definition}

We identify  $X$ with $G^0$.

We consider over $G$ the topology induced by the  product topology on $X^2$.
$\mathcal{B}$ denotes the Borel sigma-algebra on $G$.

In general groupoids are equipped with some algebraic structure (see \cite{Con} or \cite{Ren2}) but this will not be relevant for our purposes (see discussion on \cite{CELM}).

\begin{definition} A measurable (continuous) groupoid $G$ is a groupoid, such that,

$P_1(x,y)=x$, $\,P_2(x,y)=y$,  $\,h(x,y)=(y,x)$ and $Z (\,(x,s),(s,y)\,)= (x,y),$
are  Borel measurable (continuous).

\end{definition}

\begin{definition}
A transverse function  $\nu$ on the measurable groupoid $G$ is a map  of $X$ in the space of measures over the sigma-algebra $\mathcal{B}$, such
that,

1)  $\forall y \in X$, the measure $\nu^y$ has support on $[y]$,

2) $\forall A \in \mathcal{B}$, we have that $\nu^y (A)$, as a function of $y$, is measurable.

3) for any $r,s\in[y]$ we have that $\nu^s=\nu^r.$

\end{definition}

General references on groupoids and transverse functions for an Ergodic Theory audience are
\cite{LM1} and \cite{CELM}.

The only transverse function $\nu$ we will consider here is the counting measure.

\begin{definition} Given the   transverse function $\nu$ described above over the measurable groupoid $G$ we consider the {\bf Haar system} $(G, \nu)$, where $\supp \nu^{x} =[x]$ and  $\nu^{x}=\nu^{y}$, when $x\sim y$.  We assume also that $\nu^{x}([x]) \leq \infty$.
\end{definition}

A modular function $\delta:G \to \mathbb{R}$ is a continuous function, such that, $\delta(x,  z)=\delta(x,  y)\, \delta(y,  z), \forall x\sim y\sim z \in X$ (see section 3 in \cite{Con}).

\begin{definition} $c: G \to \mathbb{R}$ is a cocycle, if $c(x,  z)=c(x,  y)+c(y,  z), \forall x\sim y\sim z \in X$. Therefore, $\delta(x,  y)=e^{\beta c(y,x)} = e^{-\beta c(x,y)}$ is a modular function, for any $\beta \in \mathbb{R}$.

We define $\mathcal{C}_{\sim}=\{ c: G \to \mathbb{R}\,\,| \,\,\text{ c is a cocycle for } \sim\}$.
\end{definition}

We claim that $\mathcal{C}_{\sim}$ is a linear space. Indeed, given $c,b \in \mathcal{C}_{\sim}$ and $\alpha \in \mathbb{R}$ we have
$$ (c + \alpha b) (x,z)=c(x,z) + \alpha b(x,z) = c(x,  y)+c(y,  z) + \alpha (b(x,  y)+b(y,  z))=$$
$$=c(x,  y)+ \alpha b(x,  y) + c(y,  z) + \alpha b(y,  z)= (c + \alpha b) (x,y) + (c + \alpha b) (y,z),$$
and, $0 \in \mathcal{C}_{\sim}$.

\begin{definition}We call coboundary  a function of the form $V(y)-V(x)\in \mathcal{C}_{\sim}$, for some $V: G^0 \to \mathbb{R}$. Given $c_0 \in \mathcal{C}_{\sim}$, we say that $c$ is cohomologous to $c_0$, if
$c(x,y)=c_0(x,y) + V(y)-V(x)$.
\end{definition}

\begin{definition}
We say that a cocycle $c$ is separable if it is cohomologous to $0$, that is,
$$c(x,y)=0 + V(y)-V(x)=V(y)-V(x).$$
\end{definition}

Motivated by examples in Statistical Mechanics and
Quantum Field Theory, the authors Kubo, Martin and Schwinger  introduce the concept of KMS state
on a $C^*$-Algebra or on a  Von Neumann Algebra. They describe on Quantum Statistical Physics the role of the Gibbs state (see \cite{CELM} or \cite{Ren2}).

A large class of Von Neumann Algebras are defined from measurable groupoids and  Haar systems. A cocycle - in some sense - plays in this setting the role of the external potential in Statistical Mechanics.

The quasi-invariant condition for a probability  $M$ on $X$ (to be defined next) is related to the so called KMS-condition and to KMS states on von Neumann algebras or $C^*$ algebras (see \cite{CELM}, \cite{EL}, \cite{EL1}  or \cite{Ren2}).

We will be interested here in quasi-invariant probabilities for a certain family of groupoids and a certain kind of  cocycle (see Definition 2.3.8. in \cite{Ren2}). It will be not necessary to talk about KMS states.
\begin{definition}
  Given a cocycle $c$ we say that probability $M$ over the Borel sets  of $X$ satisfies the $(c,\ \beta)-${\bf quasi-invariant condition} for the grupoid $(G,\ \nu)$, if for any integrable function $h:G\rightarrow \mathbb{R}$, we have
  \begin{equation} \label{forti} \int\int h(s,\ x)d\nu^{x}(s)dM(x)=\int\int h(x,\ s)e^{-\beta c(x,s)}d\nu^{x}(s)dM(x),\end{equation}
where  $\beta\in \mathbb{R}$ and $c:G\rightarrow \mathbb{R}.$
\end{definition}

A natural question is: given a cocycle $c$ is there a relation of the associated quasi-stationary probability $M$ (which is defined by (\ref{forti})) with the Gibbs state (for some potential) of Statistical Mechanics? The answer is yes and this is the primary interest here.
Our main results are
Theorems \ref{quasi inv non sep}, \ref{quasi inv sep},  \ref{reduct separable to IFS} and Proposition \ref{pro}.

One of our purposes here is to show the relation of quasi-invariant probabilities with eigenprobabilities of the dual of   a general form of Ruelle operator (which takes into account the transverse function $\nu$). A particular case of this kind of result appears on section 4 in \cite{CELM} which deals with the Classical Thermodynamic Formalism (see \cite{PP}). References for different forms of Ruelle operators (some of them for IFSw) are \cite{BCLMS}, \cite{CO}, \cite{FL}, \cite{LMMS} and \cite{MO}.

Among other things we will consider here the Haar-Ruelle operator (see Definition \ref{haar-ruelle op defin}) which is a natural concept to consider in the present setting.

We point out that the results about quasi-stationary probabilities of \cite{KumRen}
and \cite{Ren2} (on the setting of $C^*$-Algebras) have a different nature of the ones we consider here (there, for instance, for just one value of $\beta$ you get a quasi-invariant probability - Theorem 3.5 in \cite{KumRen}).  Moreover, in \cite{KumRen} for such value of $\beta$  the  KMS  (quasi-invariant) probability for the $C^*$ algebra is  unique.  Here the results are for any $\beta$ and quasi-invariant probabilities are not unique.

We will consider here a more general class of groupoids than \cite{CELM}. We will present in Example \ref {examp 3} the expression of the quasi-invariant probability for a certain cocycle using an iteration method which follow from our reasoning.
This is particularly important for results related to spectral triples (see \cite{CL1}) 

\medskip

In order to obtain a connection between the groupoid (the Haar system) and the symbolic structure on $X$ we will require a mild compatibility hypothesis on the equivalence relation $\sim$.

We recall that $X=\{1,2,...,d\}^{\mathbb{N}}$ and the operation $i*x=(i,x_1, x_2, ...)$ is the concatenation of the symbol $i$ in the first position displacing all the symbols in $x=(x_1, x_2, ...)$.

\begin{definition}
We say that the equivalence relation $\sim$ is {\bf continuous} with respect to the symbolic structure if,
for all $x \in X$,  the set given by
$$\{1*x, ..., d*x\} \in X^d$$
has a continuous  representation $\psi$ of its classes,  $j*x \stackrel{\psi}{\to} [j*x]$, in the following sense:\\

First note that $\psi( j*x)$ is an $k$-uple (where $k$ is the cardinality of $[j*x]$). We assume it is well defined an ordered string $\{\psi_1( j*x),...,\psi_k( j*x)\}$.

\medskip

$\forall \varepsilon>0$, there exists $\delta>0$, such that, if $d(x,z)<\delta$ and $$[j*x]=\{\psi_1(j*x), ..., \psi_k(j*x)\}$$ $$ [j*z]=\{\psi_1(j*z), ..., \psi_k(j*z)\},$$
then, $\displaystyle\max_{i=1..k} d(\psi_i(j*x),\psi_i(j*z)) < \varepsilon$, where $k=\sharp [j*x]= \sharp [j*z]$.

For a Lipschitz  relation $\sim$ will be required that for $\psi$:
$$\max_{i=1, ..., k} d(\psi_i(j*x),\psi_i(j*z)) < r d(x,z),$$
in particular, $Lip(\psi_i(j*x)) \leq r$, for all $i,j$.
\end{definition}

\begin{definition} A groupoid $G$ associated to a continuous equivalence $\sim$  relation will be called a {\bf continuous groupoid}. A Lipschitz groupoid is defined on a similar manner.

\end{definition}

We assume from now on that $G$ is at least a continuous groupoid.

\begin{proposition} \label{order relat struct}
  The representation $\psi$ has the  absorbtion property
  $$\psi_{a}(\psi_{b}(x)) = \psi_{a}(x),$$
  for all $x \in X$.

\end{proposition}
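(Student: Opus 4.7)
The plan is to read $\psi$ as giving a canonical ordered enumeration of each equivalence class, depending only on the class itself, and to observe that $\psi_b(x)$ is already a member of $[x]$. From the very construction of $\psi(x) = (\psi_1(x),\ldots,\psi_k(x))$ as a listing of the elements of $[x]$, one has $\psi_b(x) \in [x]$, and therefore by symmetry and transitivity of $\sim$ the classes $[\psi_b(x)]$ and $[x]$ coincide as subsets of $X$; in particular the cardinalities match, $k = \sharp[\psi_b(x)] = \sharp[x]$, so the index $a$ makes sense on both sides of the asserted identity.

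Next, I would invoke the well-definedness clause built into the definition of the continuous representation, namely the phrase ``it is well defined an ordered string'', which I read as saying that the tuple $(\psi_1(y),\ldots,\psi_k(y))$ is a function of the class $[y]$ rather than of the particular representative $y$. Since $[\psi_b(x)] = [x]$, this forces the equality of tuples $\psi(\psi_b(x)) = \psi(x)$ componentwise, and extracting the $a$-th entry yields $\psi_a(\psi_b(x)) = \psi_a(x)$, as required.

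The main (indeed, the only) obstacle is the interpretive step: one must agree that the ordering assigned to each class is representative-independent, for instance by fixing a canonical total order on $X$ (say lexicographic) and listing each class in that order. With that convention in place the proof is essentially formal; continuity and the Lipschitz refinement of $\psi$ play no role here, since the key identity $[\psi_b(x)] = [x]$ is purely set-theoretic, coming only from the fact that $\psi_b(x)$ lies in the very class whose enumeration it helps define.
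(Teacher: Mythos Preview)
Your proposal is correct and follows essentially the same route as the paper's own one-line proof, which simply observes that $[\psi_b(x)]=[x]$ and that $\psi_a$ returns the $a$-th element of this common class. The only difference is that you are explicit about the interpretive step (that the ordered string depends only on the class, not the representative), which the paper takes for granted.
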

\begin{proof}
  The proof is obvious from the definition because $[\psi_{b}(x)]=[x]$ and $\psi_{a}$ returns the $a$-nth element in this class which is $\psi_{a}(x)$.
\end{proof}

\begin{example}
The equivalence relation, $x\sim y \Leftrightarrow \sigma x= \sigma y$, is  continuous with respect to the dynamic $\sigma$ because the correspondence
$$ j*x \stackrel{\psi}{\to}  [j*x]=\{1*x, ..., d*x\}$$
is constant, given by,
$\psi_i(j*x):=i*x,$
for all $j$.
\end{example}

\section{Non-separable cocycles and the Haar-Ruelle operator}

\begin{definition}\label{haar-ruelle op defin}
  Let $\varphi(x,y)$ be a  continuous function in $G$. We introduce the Haar-Ruelle operator, $L_{\varphi}$ as the operator
  $$L_{\varphi}(f)(x)= \frac{1}{d}\sum_{j=1}^{d} \int_{[j*x]} f(s)\, e^{\varphi(j*x,s)} d\nu^{j*x}(s),$$
  acting onfunctions $f:X \to \mathbb{R}$,  integrable with respect to the transverse function $\nu$.
\end{definition}

Example \ref{gen} will show the evidence that we are considering above a generalization of the classical Ruelle operator.

Let $c(x,y)$ be a general (continuous, Lipschitz, H\"older) cocycle (that is, we do not require that $c(x,y)= V(y)-V(x)$). We introduce the Haar-Ruelle operator, $L_{-\beta c}$ by choosing $\varphi=-\beta c$ in Definition~\ref{haar-ruelle op defin}, that is,
$$L_{-\beta c}(f)(x)= \frac{1}{d}\sum_{j=1}^{d} \int_{[j*x]} f(s)\, e^{-\beta c(j*x,s)} d\nu^{j*x}(s),$$
for any integrable $f:X \to \mathbb{R}$.

We recall that $[j*x]=\{\psi_1(j*x), ..., \psi_k(j*x)\}$ and $\nu^{j*x}$ is the counting measure so the  Haar-Ruelle operator takes the form:
$$L_{-\beta c}(f)(x)= \frac{1}{d}\sum_{j=1}^{d} \sum_{i=1}^{k} f(\psi_i(j*x))\, e^{-\beta c(j*x,\ \psi_i(j*x))}.$$

{\bf All of our results are true for any $\beta>0$.}

As usual the dual $L^*$ of an operator $L$ acting on continuous function acts on measures (see \cite{PP}).

\begin{theorem}\label{haar-ruelle propert}
  Consider the Haar-Ruelle operator, $L_{-\beta c}$. Then,\\
  a) $L_{-\beta c}$ is positive and preserves $C^0$,\\
  b) There exists $\lambda >0$ and a eigenmeasure $M$, such that, $L_{-\beta c}^{*} M= \lambda M$.
\end{theorem}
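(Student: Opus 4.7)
For part (a), positivity is immediate from the explicit formula
\[
L_{-\beta c}(f)(x) = \frac{1}{d}\sum_{j=1}^{d}\sum_{i=1}^{k} f(\psi_i(j*x))\, e^{-\beta c(j*x,\,\psi_i(j*x))},
\]
since every factor on the right is non-negative when $f\geq 0$. To show that $L_{-\beta c}$ preserves $C^0$, I would fix $x\in X$ and first observe that, for each $j$, the cardinality $k=\sharp[j*x]$ is locally constant in $x$; this is implicit in the continuous-groupoid hypothesis, which only compares ordered strings $\psi_i(j*x)$ and $\psi_i(j*z)$ of equal length $k=\sharp[j*x]=\sharp[j*z]$ for $z$ near $x$. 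On such a neighborhood each map $z\mapsto\psi_i(j*z)$ is continuous by the defining inequality of continuity of $\sim$, the cocycle $c$ is continuous on $G$ by hypothesis, and $f$ is continuous by assumption, so the resulting finite sum of continuous functions is continuous at $x$. Boundedness of $L_{-\beta c}(f)$ on $X$ is automatic by compactness.

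For part (b), I plan to apply the Schauder--Tychonoff fixed-point theorem to a normalization of the dual operator on the space $\mathcal{P}(X)$ of Borel probabilities on $X$, a convex and weak$^*$-compact subset of $C^0(X)^*$. By part (a), $L_{-\beta c}(1)$ is continuous, and since all exponential factors are strictly positive, $L_{-\beta c}(1)$ is bounded below by some $m>0$ on the compact space $X$. I then define
\[
T:\mathcal{P}(X)\to\mathcal{P}(X),\qquad T(M)=\frac{L^*_{-\beta c}M}{\int L_{-\beta c}(1)\,dM},
\]
which is well defined because the denominator, equal to the total mass of $L^*_{-\beta c}M$, is at least $m$. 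Continuity of $T$ in the weak$^*$ topology follows from part (a): for each $f\in C^0(X)$ the pairing $\int f\,d(L^*_{-\beta c}M)=\int L_{-\beta c}(f)\,dM$ depends weak$^*$-continuously on $M$. Schauder--Tychonoff then produces a fixed point $M_0$, and setting $\lambda=\int L_{-\beta c}(1)\,dM_0\geq m>0$ yields the desired identity $L^*_{-\beta c}M_0=\lambda M_0$.

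The main obstacle is justifying that $\sharp[j*x]$ is locally constant in $x$, so that one can legitimately pass to the limit inside the finite sum defining $L_{-\beta c}(f)$ and conclude continuity. The continuous-representation hypothesis only compares $[j*x]$ and $[j*z]$ when they have the same cardinality, and one must argue, presumably via hemicontinuity of $x\mapsto[x]$, that this condition is non-vacuous on small neighborhoods. Without that, the number of summands could jump and one would need to fall back on upper/lower semicontinuous bounds; with it, everything else is a routine application of a standard fixed-point argument familiar from the classical Ruelle operator.
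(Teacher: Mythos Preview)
Your proof is correct and follows essentially the same route as the paper: positivity is immediate from the formula, continuity comes from viewing $L_{-\beta c}(f)$ as a finite sum of compositions of continuous maps, and part (b) is exactly the Schauder--Tychonoff argument applied to the normalized dual $T(\mu)=L_{-\beta c}^{*}(\mu)/\int L_{-\beta c}(1)\,d\mu$ on $\mathcal{P}(X)$, with $\lambda=\int L_{-\beta c}(1)\,dM$ for the fixed point $M$. Your version is in fact more detailed than the paper's, which handles (a) in a single sentence and simply asserts the applicability of Tychonoff--Schauder for (b); in particular, the paper does not isolate the point you flag about local constancy of $\sharp[j*x]$, treating it as implicit in the continuous-groupoid hypothesis.
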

\begin{proof}
  (a) It is easy to see that $L_{-\beta c}$ is positive and $L_{-\beta c}$ is just the sum of the composition of continuous functions.

  (b) It is a direct application of the Tychonoff-Schauder theorem to the continuous operator $T$ given by
  $$T(\mu)=\frac{1}{\int_X L_{-\beta c}(1)(x)d\mu(x) } L_{-\beta c}^{*} (\mu).$$
  Therefore, there exists $\mu$ such that $T(\mu)=\mu$, in other words, $\mu$ satisfies
  $$\frac{1}{\int_X L_{-\beta c}(1)(x)d\mu(x) } L_{-\beta c}^{*} (\mu)=\mu.$$

Finally, take $$M=\mu \text{ and } \lambda = \int_X L_{-\beta c}(1)(x)d\mu(x) >0.$$
\end{proof}

\begin{theorem}\label{quasi inv non sep}
  Let $M$ be a measure such that $L_{-\beta c}^{*} M= \lambda M$, $\lambda>0$. Then, $M$ is quasi-invariant.
\end{theorem}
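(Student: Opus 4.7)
My plan is to verify the quasi-invariance identity (\ref{forti}) for an arbitrary continuous $h:G\to\R$ by recasting both sides as $M$-integrals of two auxiliary functions on $X$, and then using the eigenmeasure equation to equate them. Given $h$, I would introduce
\[ f(x) := \int_{[x]} h(x,t)\, e^{-\beta c(x,t)}\, d\nu^{x}(t)=\sum_{t\sim x} h(x,t)\,e^{-\beta c(x,t)}, \]
\[ F(x) := \int_{[x]} h(s,x)\, d\nu^{x}(s)=\sum_{s\sim x} h(s,x). \]
Because $\nu^x$ is the counting measure on $[x]$, the identity (\ref{forti}) is precisely the statement $\int F\, dM = \int f\, dM$.

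The core computation will be to establish $L_{-\beta c}(f)=L_{-\beta c}(F)$ pointwise on $X$. Substituting the definition of $f$ into the Haar--Ruelle operator gives
\[ L_{-\beta c}(f)(x)=\frac{1}{d}\sum_{j=1}^{d}\sum_{s\sim j*x}\Bigl(\sum_{t\sim s} h(s,t)\,e^{-\beta c(s,t)}\Bigr)e^{-\beta c(j*x,s)}. \]
The cocycle identity $c(j*x,s)+c(s,t)=c(j*x,t)$ collapses the product of exponentials to $e^{-\beta c(j*x,t)}$; transitivity of $\sim$ then lets me rewrite the inner condition $t\sim s$ as $t\sim j*x$; and after swapping the $s$- and $t$-sums the factor $e^{-\beta c(j*x,t)}$ pulls out of the $s$-sum, leaving $\sum_{s\sim j*x}h(s,t)=\sum_{s\sim t}h(s,t)=F(t)$. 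This produces
\[ L_{-\beta c}(f)(x)=\frac{1}{d}\sum_{j=1}^{d}\sum_{t\sim j*x}F(t)\,e^{-\beta c(j*x,t)}=L_{-\beta c}(F)(x). \]

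With this pointwise equality in hand, the eigenmeasure relation $L_{-\beta c}^{*}M=\lambda M$ yields $\lambda\int f\,dM=\int L_{-\beta c}(f)\,dM=\int L_{-\beta c}(F)\,dM=\lambda\int F\,dM$, and cancelling the strictly positive $\lambda$ delivers quasi-invariance.

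The genuine content of the argument lies in the cocycle/transitivity manipulation forcing $L_{-\beta c}(f)=L_{-\beta c}(F)$; the rest is automatic once the pair $(f,F)$ has been identified. Continuity of $f$ and $F$, needed to feed them into the eigenrelation, follows from the continuity of the representation $\psi$ built into the continuous groupoid assumption.
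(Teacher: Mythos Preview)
Your argument is correct and follows essentially the same strategy as the paper: define the two auxiliary functions on $X$ (your $F$ and $f$ are the paper's $f$ and $g$), show that $L_{-\beta c}$ maps them to the same function, and then cancel $\lambda$ from the eigenmeasure identity. The only cosmetic difference is that the paper writes out the equivalence classes via the representation $\psi_i$ and invokes the absorption property $\psi_t(\psi_i(j*x))=\psi_t(j*x)$, whereas you phrase the same step as transitivity of $\sim$; it then expands both $L_{-\beta c}(f)$ and $L_{-\beta c}(g)$ and matches them by swapping the summation indices $i\leftrightarrow t$, while you transform one expression directly into the other.
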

\begin{proof}

The quasi-invariant equation, when $\nu^{x}$ is the counting measure is
$$\int\sum_{t=1}^{k} h(\psi_{t}(x),\ x) dM(x)=\int\sum_{t=1}^{k} h(x,\ \psi_{t}(x))e^{-\beta c(x,\ \psi_{t}(x))} dM(x),$$
which is equivalent to
$$\int f(x) dM(x)=\int g(x) dM(x)$$
$$\int f(x) \lambda dM(x)=\int g(x) \lambda dM(x)$$
$$\int f(x) d L_{-\beta c}^{*} M(x)=\int g(x) dL_{-\beta c}^{*} M(x)$$
$$\int L_{-\beta c}(f)(x) dM(x)=\int L_{-\beta c}(g)(x) dM(x),$$
where $f(x):=\sum_{t=1}^{k} h(\psi_{t}(x),\ x)$ and $g(x):= \sum_{t=1}^{k} h(x,\ \psi_{t}(x))e^{-\beta c(x,\ \psi_{t}(x))}$.

We will evaluate $A= L_{-\beta c}(f)(x)$ and $B=L_{-\beta c}(g)(x)$:
$$A= L_{-\beta c}(f)(x)=\frac{1}{d}\sum_{j=1}^{d} \sum_{i=1}^{k} f(\psi_i(j*x))\, e^{-\beta c(j*x,\ \psi_i(j*x))}=$$
$$= \frac{1}{d}\sum_{j=1}^{d} \sum_{i=1}^{k} \sum_{t=1}^{k} h(\psi_{t}(\psi_i(j*x)),\ \psi_i(j*x)) \, e^{-\beta c(j*x,\ \psi_i(j*x))}=$$
$$= \frac{1}{d}\sum_{j=1}^{d} \sum_{i=1}^{k} \sum_{t=1}^{k} h(\psi_{t}(j*x)),\ \psi_i(j*x)) \, e^{-\beta c(j*x,\ \psi_i(j*x))},$$
because we can take $\psi_{t}(\psi_i(j*x))= \psi_{t}(j*x)$ (from Proposition~\ref{order relat struct}).

On the other hand
$$B= L_{-\beta c}(g)(x)=\frac{1}{d}\sum_{j=1}^{d} \sum_{i=1}^{k} g(\psi_i(j*x))\, e^{-\beta c(j*x,\ \psi_i(j*x))}=$$
$$=\frac{1}{d}\sum_{j=1}^{d} \sum_{i=1}^{k} \sum_{t=1}^{k} h(\psi_i(j*x),\ \psi_{t}(\psi_i(j*x)))e^{-\beta c(\psi_i(j*x),\ \psi_{t}(\psi_i(j*x)))}\, e^{-\beta c(j*x,\ \psi_i(j*x))}=$$
$$=\frac{1}{d}\sum_{j=1}^{d} \sum_{i=1}^{k} \sum_{t=1}^{k} h(\psi_i(j*x),\ \psi_{t}(\psi_i(j*x)))e^{-\beta c(j*x,\  \psi_{t}(\psi_i(j*x)))}=$$
$$=\frac{1}{d}\sum_{j=1}^{d} \sum_{i=1}^{k} \sum_{t=1}^{k} h(\psi_i(j*x),\ \psi_{t}(j*x))e^{-\beta c(j*x,\  \psi_{t}(j*x))},$$
because we can take $\psi_{t}(\psi_i(j*x))= \psi_{t}(j*x)$ (from Proposition~\ref{order relat struct}).

We claim that, for each $1 \leq j \leq d$ we have
$$\sum_{i=1}^{k} \sum_{t=1}^{k} h(\psi_{t}(j*x)),\ \psi_i(j*x)) \, e^{-\beta c(j*x,\ \psi_i(j*x))}=$$
$$\sum_{i=1}^{k} \sum_{t=1}^{k} h(\psi_i(j*x),\ \psi_{t}(j*x)) e^{-\beta c(j*x,\  \psi_{t}(j*x))},$$
so $A=B$. Indeed, for each $1 \leq  j \leq d$ we have the equality by changing the role of $t$ and $i$, and this proves our claim.
\end{proof}

\begin{corollary} \label{opr}
If $c$ is continuous, the measure $M$, given by  Theorem \ref{haar-ruelle propert}  is $(c,\beta)-$quasi-invariant.
\end{corollary}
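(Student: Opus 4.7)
The corollary is essentially a bookkeeping combination of the two preceding results, with the continuity hypothesis on $c$ playing the role of ensuring that the Haar–Ruelle machinery actually runs. My plan is therefore to unwind the two theorems in the correct order and check that the continuity of $c$ is precisely what is needed to invoke each of them.

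First I would observe that, since $c:G\to \mathbb{R}$ is continuous and the representation $\psi$ is continuous by hypothesis (we are working under a continuous groupoid), the expression
$$L_{-\beta c}(f)(x)=\frac{1}{d}\sum_{j=1}^{d}\sum_{i=1}^{k} f(\psi_i(j*x))\,e^{-\beta c(j*x,\,\psi_i(j*x))}$$
defines an operator that maps $C^0(X)$ into $C^0(X)$ and is positive; this is exactly part (a) of Theorem~\ref{haar-ruelle propert}. Next, I would apply part (b) of Theorem~\ref{haar-ruelle propert} to obtain a probability measure $M$ on $X$ and a constant $\lambda>0$ such that $L_{-\beta c}^{*}M=\lambda M$.

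With this $M$ and $\lambda$ in hand, the hypothesis of Theorem~\ref{quasi inv non sep} is satisfied verbatim. Applying that theorem directly yields that $M$ is $(c,\beta)$-quasi-invariant, that is, it satisfies the identity~(\ref{forti}), and this closes the argument.

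The only subtle point — which I would flag rather than struggle with — is that both previous results implicitly rely on the continuous-groupoid assumption made earlier in the paper so that $x\mapsto L_{-\beta c}(f)(x)$ is genuinely continuous and the Tychonoff–Schauder fixed point argument can be run on the compact convex set of Borel probabilities on $X$. Once continuity of $c$ is added, everything in the two previous theorems fires without modification, so there is no essential obstacle: the corollary is a direct concatenation of Theorems~\ref{haar-ruelle propert} and~\ref{quasi inv non sep}.
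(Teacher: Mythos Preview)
Your proposal is correct and is exactly the paper's intended argument: the corollary is stated without proof because it is the immediate concatenation of Theorem~\ref{haar-ruelle propert} (existence of the eigenmeasure $M$) and Theorem~\ref{quasi inv non sep} (any such eigenmeasure is quasi-invariant), with the continuity of $c$ ensuring that $L_{-\beta c}$ preserves $C^0$ so that the fixed-point argument applies. There is nothing to add.
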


\section{Separable cocycles} \label{sepa}
For separable cocycles we can find quasi-invariant measures by means of a more simple operator.
To do that we rewrite the quasi-invariant condition
$$\displaystyle \int\int h(s,\ x)d\nu^{x}(s)dM(x)=\int\int h(x,\ s)e^{-\beta (V(s) - V(x))}d\nu^{x}(s)dM(x)$$
on the form
$$\displaystyle \int\int g(s,\ x) e^{-\beta V(s)} d\nu^{x}(s)dM(x)=\int\int g(x,\ s)e^{-\beta V(s)}d\nu^{x}(s)dM(x),$$
by taking $h(s,x):=g(s,\ x) e^{-\beta V(s)}$.
Using the fact that the measure $\nu^{x}$ is the counting measure we obtain
$$ \int\sum_{t=1}^{k} g(\psi_t(x),\ x) e^{-\beta V(\psi_t(x))} dM(x)=\int\sum_{t=1}^{k} g(x,\ \psi_t(x))e^{-\beta V(\psi_t(x))}dM(x).$$

\medskip

By abuse of language if $M$ is quasi-invariant for $c(x,y) = V(y)- V(x)$ we may say that $M$ is quasi-invariant for $V$.

\medskip

\begin{definition}
  Let $c(x,y)=V(y)-V(x)$ be a separable cocycle, for a continuous (or, H\"older) potential $V$. We introduce the separable Haar-Ruelle operator, $L_{-\beta V}$ by choosing $\varphi(x,y)=-\beta V(y)$ that is,
  $$L_{-\beta V}(f)(x)= \frac{1}{d}\sum_{j=1}^{d} \int_{s \in [j*x]} f(s)\, e^{-\beta V(s)}d \nu^{j*x}(s),$$
  for any integrable $f:X \to \mathbb{R}$.
\end{definition}

\begin{example} \label{gen}
In the case $x\sim y \Leftrightarrow \sigma x= \sigma y$, and $\nu^{j*x}$ being the counting measure, we get that the separable Haar-Ruelle operator is
$$L_{-\beta V}(f)(x)= \frac{1}{d}\sum_{j=1}^{d} \sum_{i=1}^{d} f(i*x)\, e^{-\beta V(i*x)}= \sum_{i=1}^{d} f(i*x)\, e^{-\beta V(i*x)},$$
which is the classical Ruelle operator associated to the potential $-\beta \,V$.
\end{example}

The same arguments used in Theorem~\ref{haar-ruelle propert} proves the next theorem in the separable case.
\begin{theorem}\label{separable haar-ruelle propert} Assuming that $V$ is just continuous
  consider the separable Haar-Ruelle operator, $L_{-\beta V}$. Then,\\
  a) $L_{-\beta V}$ is positive and preserves $C^0$,\\
  b) There exists an  eigenmeasure $M$, such that, $L_{-\beta V}^{*} M= \lambda M$, for some positive value $\lambda$.
\end{theorem}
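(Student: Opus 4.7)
The plan is to follow, essentially verbatim, the argument the authors used for Theorem~\ref{haar-ruelle propert}, since the only structural change is that the factor $e^{\varphi(j*x,s)}$ is replaced by $e^{-\beta V(s)}$, which does not affect the relevant analytical properties. The exponential weight $e^{-\beta V(s)}$ is strictly positive whenever $V$ is continuous on the compact space $X$, so the positivity part of (a) is immediate: if $f\geq 0$ then every summand $f(\psi_i(j*x))\,e^{-\beta V(\psi_i(j*x))}$ is $\geq 0$, hence $L_{-\beta V}(f)\geq 0$.

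For the preservation of $C^0$, I would invoke the continuous groupoid hypothesis. Given $f\in C^0(X)$ and $V\in C^0(X)$, each map $x\mapsto \psi_i(j*x)$ is continuous by definition of continuous equivalence relation, so by composition $x\mapsto f(\psi_i(j*x))\,e^{-\beta V(\psi_i(j*x))}$ is continuous. Since the cardinality $k=\sharp[j*x]$ is locally constant (built into the hypothesis that the representation has the stated form on a $\delta$-ball), the finite sum
$$L_{-\beta V}(f)(x)=\frac{1}{d}\sum_{j=1}^{d}\sum_{i=1}^{k} f(\psi_i(j*x))\,e^{-\beta V(\psi_i(j*x))}$$
is continuous in $x$, proving (a).

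For (b), I would mimic the Tychonoff--Schauder argument. On the compact convex set of Borel probability measures on $X$ (endowed with the weak-$*$ topology) define
$$T(\mu)=\frac{1}{\int_X L_{-\beta V}(1)(x)\,d\mu(x)}\,L_{-\beta V}^{*}(\mu).$$
The denominator is bounded below by the positive minimum of the continuous function $L_{-\beta V}(1)$ on $X$, so $T$ is well-defined. A quick computation gives $\int 1\, dT(\mu)=1$, so $T$ maps probabilities to probabilities, and continuity of $T$ follows from the fact that $L_{-\beta V}$ preserves $C^0$ (which makes $L_{-\beta V}^{*}$ weak-$*$ continuous). Tychonoff--Schauder then yields a fixed point $M=\mu$, and setting $\lambda=\int_X L_{-\beta V}(1)(x)\,d\mu(x)>0$ gives $L_{-\beta V}^{*}M=\lambda M$.

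The only point requiring any care—and the only place where the separable case differs cosmetically from the nonseparable one—is ensuring that $L_{-\beta V}(1)$ is strictly positive on $X$ so that the normalizing denominator in $T$ is bounded away from zero; this is immediate because it is a finite sum of strictly positive exponentials on a compact space. No essentially new idea beyond that of Theorem~\ref{haar-ruelle propert} is needed, so I expect no real obstacle.
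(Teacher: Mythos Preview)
Your proposal is correct and follows exactly the route the paper takes: the authors simply state that ``the same arguments used in Theorem~\ref{haar-ruelle propert} prove the next theorem in the separable case,'' and your write-up spells out precisely that argument (positivity by inspection, preservation of $C^0$ by composition with the continuous $\psi_i(j*\cdot)$, and existence of the eigenmeasure via the Tychonoff--Schauder fixed point for $T(\mu)=L_{-\beta V}^{*}(\mu)/\int L_{-\beta V}(1)\,d\mu$). No additional idea is required.
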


We will present an specific example which will help the reader in understanding how the above theorem can be applied for getting quasi-stationary probabilities on our setting.

\begin{example}\label{examp 1 3}
In this example, the equivalence relation $x\sim y \Leftrightarrow x_i=y_i, i\neq 1\text{ and }3$, is  continuous with respect to the symbolic structure because the correspondence
$ k*x \stackrel{\psi}{\to}  [k*x]$
is given by,
$$\psi_{ij}(k*x)=\psi_{ij}((k, x_1, x_2, x_3,...))=  (i, x_1, j, x_3,...)\,,\, 1 \leq i,j \leq d,$$
for all $1\leq k \leq d$.
Notice that $\sharp [y] =d^2$ for all $y$ and $\sigma(i, x_1, j, x_3,...)=x$ only if $j=x_2$.

\begin{figure}[!ht]  \centering  \includegraphics[width=8cm, height=4.5cm]{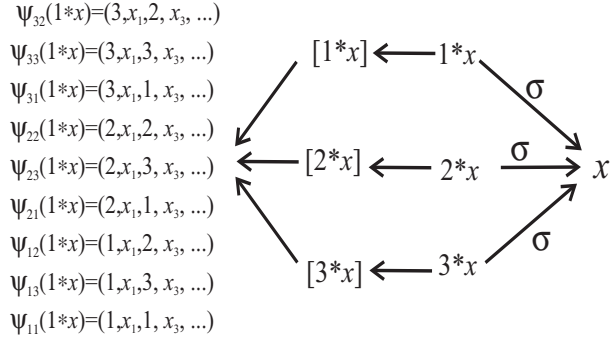}  \caption{Description of $[k*x]=\{(i, x_1, j, x_3,...)| 1 \leq i,j \leq d\}$, for $d=3$.} \label{fig rel 1 e 3} \end{figure}

In this case, $\nu^{k*x}$ being the counting measure, the Haar-Ruelle operator will be,
$$L_{-\beta V}(f)(x)= \frac{1}{d}\sum_{k=1}^{d} \int_{s \in [k*x]} f(s)\, e^{-\beta V(s)}d \nu^{k*x}(s)=$$ $$= \sum_{i,j=1}^{d} f((i, x_1, j, x_3,...))\, e^{-\beta V((i, x_1, j, x_3,...))}=$$
$$= \sum_{i,j=1}^{d} f(\tau_{ij}(x))\, e^{-\beta V(\tau_{ij}(x))}.$$

This is exactly the Hutchinson-Barnsley operator for a contractive (${\rm Lip}(\tau_{ij})= 1/8$) IFSw $(\Sigma, \tau_{ij}, q_{ij})$, where $\tau_{ij}(x)=(i, x_1, j, x_3,...)$ and $q_{ij}(x)=e^{-\beta V(\tau_{i j} \,\,x)}$.

By Theorem~\ref{Fan HB Thm}, there is a unique positive function $\varphi$ such that $L_{-\beta V}(\varphi)(x)= \lambda \varphi$ and a measure $\mu$ such that $L_{-\beta V}^*(\mu) = \lambda \mu$. We claim that $M=\mu$ is a quasi-invariant measure.

Using the fact that $\nu^{x}$ is the counting measure and
$$[x]=\{(r, x_2, t, x_4,...)| 1 \leq r,t \leq d\}$$
we must to prove that for any $g$
$$\int\sum_{r,t=1}^{d} g((r, x_2, t, x_4,...),\ x) e^{-\beta V((r, x_2, t, x_4,...))} dM(x)=$$
\begin{equation} \label{gg} \int\sum_{r,t=1}^{d} g(x,\ (r, x_2, t, x_4,...))e^{-\beta V((r, x_2, t, x_4,...))} dM(x).\end{equation}
Multiplying each side by $\lambda$ and assuming that $L_{-\beta V}^*(M) = \lambda M$ we rewrite the lhs of the above equation as
$$A:=\int\sum_{r,t=1}^{d} g((r, x_2, t, x_4,...),\ x) e^{-\beta V((r, x_2, t, x_4,...))} \lambda dM(x)= $$
$$= \int L_{-\beta V} \sum_{r,t=1}^{d} g((r, x_2, t, x_4,...),\ x) e^{-\beta V((r, x_2, t, x_4,...))}   dM(x)=$$
$$= \int \sum_{y=\tau_{ij}(x)}  \sum_{r,t=1 }^{d} g((r, y_2, t, y_4,...),\ y) e^{-\beta V((r, y_2, t, y_4,...))}  \, e^{-\beta V(y)} dM(x),$$
where $y=\tau_{ij}(x)=(i, x_1, j, x_3,...)$, so $y_2=x_1$ and  $y_4=x_3$, etc.

Thus, we get
$$A= \int \sum_{i,j=1 }^{d}  \sum_{r,t=1 }^{d} g((r, x_1, t, x_3,...),\ (i, x_1, j, x_3,...)) e^{-\beta V((r, x_1, t, x_3,...))} $$ $$ \, e^{-\beta V((i, x_1, j, x_3,...))} dM(x).$$

Rewriting the rhs of the equation (\ref{gg}) we get
$$B:= \int\sum_{r,t=1}^{d} g(x,\ (r, x_2, t, x_4,...))e^{-\beta V((r, x_2, t, x_4,...))} \lambda dM(x)=$$
$$= \int \sum_{i,j=1 }^{d}  \sum_{r,t=1 }^{d} g((i, x_1, j, x_3,...),\ (r, x_1, t, x_3,...)) e^{-\beta V((i, x_1, j, x_3,...))} $$ $$\, e^{-\beta V((r, x_1, t, x_3,...))} dM(x).$$

Thus, $A=B$, and this shows that $M$ is a quasi-invariant probability.
\end{example}

Returning to the general case and inspired by the Example~\ref{examp 1 3}  we will obtain a  fundamental result.
\begin{theorem}\label{quasi inv sep} 
  Let $M$ be a measure such that $L_{-\beta V}^{*} M= \lambda M$, $c(x,y)=V(y)-V(x)$ a separable cocycle and $V$ a continuous function. Then, $M$ is quasi-invariant for $c$.
\end{theorem}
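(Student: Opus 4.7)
The plan is to mimic the strategy of Theorem \ref{quasi inv non sep}, adapted to the transformed form of the quasi-invariance condition that is specific to separable cocycles. As spelled out just before Definition of $L_{-\beta V}$, after the substitution $h(s,x)=g(s,x)e^{-\beta V(s)}$ the quasi-invariance condition becomes
\[
\int \sum_{t=1}^{k} g(\psi_t(x),x)\,e^{-\beta V(\psi_t(x))}\,dM(x)
=\int \sum_{t=1}^{k} g(x,\psi_t(x))\,e^{-\beta V(\psi_t(x))}\,dM(x).
\]
So I would set $F(x):=\sum_{t} g(\psi_t(x),x)e^{-\beta V(\psi_t(x))}$ and $G(x):=\sum_{t} g(x,\psi_t(x))e^{-\beta V(\psi_t(x))}$ and reduce the claim to proving $\int F\,dM=\int G\,dM$.

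Next, I would multiply both sides by $\lambda$ and use the eigenmeasure relation $L_{-\beta V}^{*}M=\lambda M$ to transfer everything to the operator side, reducing the identity to $\int L_{-\beta V}(F)\,dM=\int L_{-\beta V}(G)\,dM$. The natural way to settle this is to show the pointwise equality $L_{-\beta V}(F)(x)=L_{-\beta V}(G)(x)$.

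The core computation would then expand, using the counting measure and Definition of $L_{-\beta V}$:
\[
L_{-\beta V}(F)(x)=\tfrac{1}{d}\sum_{j=1}^{d}\sum_{i=1}^{k} F(\psi_i(j*x))\,e^{-\beta V(\psi_i(j*x))},
\]
substitute $F(\psi_i(j*x))=\sum_{t}g(\psi_t(\psi_i(j*x)),\psi_i(j*x))e^{-\beta V(\psi_t(\psi_i(j*x)))}$, and apply the absorption Proposition \ref{order relat struct} to replace $\psi_t(\psi_i(j*x))$ with $\psi_t(j*x)$. The result is
\[
L_{-\beta V}(F)(x)=\tfrac{1}{d}\sum_{j,i,t} g(\psi_t(j*x),\psi_i(j*x))\,e^{-\beta V(\psi_t(j*x))}\,e^{-\beta V(\psi_i(j*x))},
\]
and analogously
\[
L_{-\beta V}(G)(x)=\tfrac{1}{d}\sum_{j,i,t} g(\psi_i(j*x),\psi_t(j*x))\,e^{-\beta V(\psi_t(j*x))}\,e^{-\beta V(\psi_i(j*x))}.
\]
Because the weight $e^{-\beta V(\psi_t(j*x))}e^{-\beta V(\psi_i(j*x))}$ is symmetric in $(i,t)$, for each fixed $j$ the two triple sums coincide after the relabeling $i\leftrightarrow t$ in the inner $(i,t)$-sum, so $L_{-\beta V}(F)=L_{-\beta V}(G)$ pointwise.

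There is not really a ``hard part'' here, since the argument is the separable analogue of Theorem \ref{quasi inv non sep}; the only subtlety is making sure the transformed version of the quasi-invariance condition is the correct target (which is verified in the preamble of Section \ref{sepa}) and that the absorption property can be used twice so that the weight $e^{-\beta V(\psi_t(j*x))}e^{-\beta V(\psi_i(j*x))}$ is manifestly $(i,t)$-symmetric, which is the step that makes the separable case even cleaner than the non-separable one.
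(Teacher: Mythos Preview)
Your proposal is correct and follows essentially the same argument as the paper: reduce the separable quasi-invariance identity to $\int L_{-\beta V}(F)\,dM=\int L_{-\beta V}(G)\,dM$ via the eigenmeasure relation, expand both sides using the absorption property of Proposition~\ref{order relat struct}, and conclude by the $(i,t)$-symmetry of the weight $e^{-\beta V(\psi_t(j*x))}e^{-\beta V(\psi_i(j*x))}$. The only difference is notational (you write $g,F,G$ where the paper writes $h,f,g$).
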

\begin{proof} As $c$ is a separable cocycle, we have to prove that
$$ \int\sum_{t=1}^{k} h(\psi_t(x),\ x) e^{-\beta V(\psi_t(x))} dM(x)=\int\sum_{t=1}^{k} h(x,\ \psi_t(x))e^{-\beta V(\psi_t(x))}dM(x).$$

 Assume that $L_{-\beta V}^{*} M= \lambda M$ (by Theorem~\ref{separable haar-ruelle propert}).
The quasi-invariant condition is equivalent to
$$\int f(x) dM(x)=\int g(x) dM(x)$$
$$\int f(x) \lambda dM(x)=\int g(x) \lambda dM(x)$$
$$\int f(x) d L_{-\beta V}^{*} M(x)=\int g(x) dL_{-\beta V}^{*} M(x)$$
$$\int L_{-\beta V}(f)(x) dM(x)=\int L_{-\beta V}(g)(x) dM(x),$$
where
$$f(x):=\sum_{t=1}^{k} h(\psi_t(x),\ x) e^{-\beta V(\psi_t(x))}$$ and
$$g(x)= \sum_{t=1}^{k} h(x,\ \psi_t(x))e^{-\beta V(\psi_t(x))}.$$

We will evaluate $A= L_{-\beta V}(f)(x)$ and $B=L_{-\beta V}(g)(x)$:
$$A= L_{-\beta V}(f)(x)=\frac{1}{d}\sum_{j=1}^{d} \sum_{i=1}^{k} f(\psi_i(j*x))\, e^{-\beta V(\psi_i(j*x))}=$$
$$= \frac{1}{d}\sum_{j=1}^{d} \sum_{i=1}^{k} \sum_{t=1}^{k} h(\psi_{t}(\psi_i(j*x)),\ \psi_i(j*x))e^{-\beta V(\psi_{t}(\psi_i(j*x)))} \, e^{-\beta V( \psi_i(j*x))}=$$
$$= \frac{1}{d}\sum_{j=1}^{d} \sum_{i=1}^{k} \sum_{t=1}^{k} h(\psi_{t}(j*x)),\ \psi_i(j*x)) \, e^{-\beta (V(\psi_t(j*x)) + V( \psi_i(j*x)))},$$
because we can take $\psi_{t}(\psi_i(j*x))= \psi_{t}(j*x)$\,\, (from Proposition~\ref{order relat struct}).

On the other hand, by an analogous computation
$$B= L_{-\beta V}(g)(x)=\frac{1}{d}\sum_{j=1}^{d} \sum_{i=1}^{k} g(\psi_i(j*x))\, e^{-\beta V(\psi_i(j*x))}=$$
$$= \frac{1}{d}\sum_{j=1}^{d} \sum_{i=1}^{k} \sum_{t=1}^{k} h(\psi_{i}(j*x)),\ \psi_t(j*x)) \, e^{-\beta (V( \psi_i(j*x))) +V(\psi_t(j*x))},$$
because we can take $\psi_{t}(\psi_i(j*x))= \psi_{t}(j*x)$ \,(from Proposition~\ref{order relat struct}).

Obviously $A=B$ which proves our claim.
\end{proof}

\begin{remark}
  We observe that in Corollary \ref{coro}, the measure
$M_0$ such that $L_{-\beta V}^{*} M_0= \lambda_0 M_0$, given by Theorem~\ref{separable haar-ruelle propert}, is quasi-invariant.
In other words, we proved that
$$ \int\sum_{t=1}^{k} h(\psi_t(x),\ x) e^{-\beta V(\psi_t(x))} dM_0(x)=\int\sum_{t=1}^{k} h(x,\ \psi_t(x))e^{-\beta V(\psi_t(x))}dM_0(x).$$

However, $L_{-\beta c}^{*} M_1= \lambda_1 M_1$ by Theorem~\ref{haar-ruelle propert} and, moreover,  by Corollary \ref{opr} $M_1$ is quasi-invariant, that is
$$ \int\sum_{t=1}^{k} h(\psi_t(x),\ x)  dM_1(x)=\int\sum_{t=1}^{k} h(x,\ \psi_t(x))e^{-\beta ( V(\psi_t(x))-V(x))} dM_1(x),$$
because $c(x,y)=V(y)-V(x)$.

Thus, $M_0$ and $M_1$ are quasi-invariant measures for the same separable cocycle $c(x,y)=V(y)-V(x)$ and they are not necessarily equal (they are eigenmeasures of different operators). This abundance of quasi-invariant measures will be explored in the next section. Note that for the more particular groupoid considered on  the setting of \cite{CELM} it is also shown that the quasi-invariant probability is not unique (there the cocycle was Holder). The groupoid $G$ of the mentioned result on \cite{CELM} is the one presented in Example \ref{gen} (a continuous groupoid).
\end{remark}

\begin{corollary}\label{coro}
If $V$ is in continuous, the measure $M$, such that, $L_{-\beta V}^{*} M= \lambda M$, is quasi-invariant for the associated $c$.
 \end{corollary}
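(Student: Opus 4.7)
The plan is to obtain this corollary as a direct combination of the two preceding results, with no new computation required. First, I would apply Theorem~\ref{separable haar-ruelle propert}: since $V$ is continuous, the separable Haar-Ruelle operator $L_{-\beta V}$ is positive and preserves $C^0(X)$, and the Tychonoff--Schauder fixed point argument (exactly as sketched for Theorem~\ref{haar-ruelle propert}(b)) yields a probability $M$ and a positive eigenvalue $\lambda = \int_X L_{-\beta V}(1)(x)\, dM(x) > 0$ such that $L_{-\beta V}^* M = \lambda M$.

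Next I would feed the triple $(M,\lambda,V)$ into Theorem~\ref{quasi inv sep}. The hypotheses of that theorem are precisely that $V$ is continuous, $c(x,y) = V(y) - V(x)$ is the associated separable cocycle, and $M$ is an eigenmeasure of $L_{-\beta V}^*$ with positive eigenvalue --- all of which are in force by the preceding step. The conclusion of Theorem~\ref{quasi inv sep} is then exactly the statement that $M$ is $(c,\beta)$-quasi-invariant, which is what the corollary claims.

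There is essentially no obstacle here; the corollary is a packaging statement bridging an existence result (Theorem~\ref{separable haar-ruelle propert}) with a characterization result (Theorem~\ref{quasi inv sep}). The only small point worth flagging in the writeup is the positivity of $\lambda$, which is needed in order for Theorem~\ref{quasi inv sep} to apply: this follows because $L_{-\beta V}$ is a positive operator, $L_{-\beta V}(1) > 0$ pointwise (as a strictly positive sum of exponentials), and $M$ is a probability measure, so $\lambda = \int L_{-\beta V}(1)\, dM > 0$. With this observation in hand, the corollary is immediate.
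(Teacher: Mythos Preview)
Your proposal is correct and matches the paper's own proof exactly: the paper simply refers to Theorem~\ref{quasi inv sep} for the quasi-invariance and to Theorem~\ref{separable haar-ruelle propert} for the existence of $M$. Your additional remark on the strict positivity of $\lambda$ is a helpful clarification but not a departure from the paper's argument.
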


For the proof see Theorem \ref{quasi inv sep} (note that $M$ exists by Theorem \ref{separable haar-ruelle propert}).

\section{Quasi-invariant measures arising from IFS}

On this section we will assume some more regularity on the cocycle.

The terminology IFSw (IFS with weights) was introduced in \cite{LO} and \cite{CO} for the case of a IFS where the weights are not normalized. This case was also considered  in \cite{FL} without giving a special name.

We recall a fundamental result on the Hutchinson-Barnsley operator for an IFSw (see \cite{FL}).

\begin{theorem}\label{Fan HB Thm}
Suppose $0\leq j \leq n-1$ and  $R=(Y, \tau_j, q_j)$ an IFSw satisfying the following hypothesis \\
a) $\tau_j$ is a contraction and \\
b) each $q_j$ is Dini continuous that is, $\displaystyle\int_{0}^{1}\max_{d(x,y)\leq t}\frac{|\ln q_j(x) - \ln q_j(y)|}{t}dt<+\infty$\\
and $\rho >0$, the spectral radius of $\displaystyle B_{R}(f)(x)=\sum_{j=0}^{n-1} q_j(x) f(\tau_j x)$, restricted to the attractor
$K$ of $\mathcal{R}=(Y, \tau_j)$, where
$$\bigcup_{j=0}^{n-1} \tau_j (K)=K,$$
then, there exists  a unique $h >0$ and a unique $\mu  \in M(K)$, such that, $$B_{R}(h )=\rho  \,h  \; \mathcal{L}_{R}(\mu )= \rho\,  \mu \text{ and } \int h \,d\mu=1,$$
where $\mathcal{L}_{R}=B_{R}^*$, and $\alpha =h \, \mu $ is a probability called the Gibbs measure of the system. Moreover, for every $f_0 \in C(K)$ we get $\rho^{-n}\, B_{R}^{n}(f_0) \to (\int f_0 d\mu ) h $, and for any $\mu_0 \in M(K)$  we get $\rho^{-n} \mathcal{L}_{R}^{n}(\mu_0) \to (\int h  d\mu_0) \mu $.
\end{theorem}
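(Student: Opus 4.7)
The plan is to run a Ruelle--Perron--Frobenius type argument adapted to an IFSw, with the Dini continuity of the weights playing the role that H\"older continuity plays in the shift-map version. I would work on the attractor $K$ with the sup-norm on $C(K)$ and the weak-$*$ topology on $M(K)$, producing first the eigenmeasure, then the eigenfunction, and finally uniqueness together with the two convergence statements.

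The eigenmeasure $\mu$ comes out of the same Schauder--Tychonoff device already used in Theorem~\ref{haar-ruelle propert}: the normalized dual $T(\nu) = \mathcal{L}_{R}(\nu)/\mathcal{L}_{R}(\nu)(\mathbf{1})$ is a continuous self-map of the compact convex set of probabilities on $K$, so it admits a fixed point $\mu$ with $\mathcal{L}_{R}\mu = \lambda \mu$ where $\lambda = \int B_{R}(\mathbf{1})\,d\mu > 0$. A Gelfand-type comparison $\lambda^n \asymp \|B_{R}^n \mathbf{1}\|_\infty$, using positivity of $B_{R}$, identifies $\lambda$ with the spectral radius $\rho$.

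The heart of the proof is to produce a strictly positive continuous eigenfunction $h$, and for this I would analyze $h_n := \rho^{-n} B_{R}^n(\mathbf{1})$. Expanding the iterate,
$$h_n(x) = \rho^{-n}\sum_{|\omega|=n} q_\omega(x), \qquad q_\omega(x) = \prod_{k=1}^n q_{\omega_k}\bigl(\tau_{\omega_{k-1}}\cdots\tau_{\omega_1}(x)\bigr).$$
Since each $\tau_j$ contracts with ratio at most $r<1$, the $k$-th factor evaluated at $x$ and at $y$ sees arguments within distance $r^{k-1} d(x,y)$, so telescoping $\log(q_\omega(x)/q_\omega(y))$ and summing in $k$ yields
$$|\log h_n(x) - \log h_n(y)| \leq C\,\Omega(d(x,y)),$$
where the modulus $\Omega$ is built from the Dini integral of $\log q_j$ and is independent of $n$. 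Combined with the two-sided bound $c \leq h_n \leq C$ obtained by integrating against $\mu$ (normalizing $\int h_n\,d\mu = 1$), Arzel\`a--Ascoli delivers a Ces\`aro or subsequential limit $h > 0$ in $C(K)$ satisfying $B_{R} h = \rho h$.

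Once $h$ is in hand, I would normalize $\int h\,d\mu = 1$ and pass to the Markov operator $P(f) := h^{-1} B_{R}(hf)/\rho$, which preserves the probability $\alpha = h\mu$. Uniqueness of the pair $(h,\mu)$ and the convergence statements $\rho^{-n}B_{R}^n f_0 \to (\int f_0\,d\mu)\,h$ and $\rho^{-n}\mathcal{L}_{R}^n \mu_0 \to (\int h\,d\mu_0)\,\mu$ would then follow by showing that $P$ is a strict contraction on the cone of positive continuous functions in Birkhoff's projective metric, reusing the distortion estimate above to produce a contraction constant strictly less than one. The main obstacle I expect is precisely that distortion estimate: under H\"older regularity one gets a summable geometric series essentially for free, whereas under mere Dini continuity the telescoping has to be organized carefully so that the resulting bound becomes exactly the \emph{finite} Dini integral, uniformly in $n$; once this is in place the rest is a packaging of standard cone-contraction machinery.
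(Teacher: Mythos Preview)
The paper does not prove this theorem at all: it is quoted from Fan--Lau \cite{FL} as a known result (``We recall a fundamental result on the Hutchinson-Barnsley operator for an IFSw''), so there is no in-paper proof to compare against.

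Your sketch is a faithful outline of the Fan--Lau argument itself. The architecture---Schauder--Tychonoff for the eigenmeasure, a uniform log-distortion bound on $\rho^{-n}B_R^n\mathbf{1}$ coming from the Dini integral to force equicontinuity and hence a positive eigenfunction, then normalization to a Markov operator and a Birkhoff-cone contraction for uniqueness and the two convergence statements---is exactly what \cite{FL} does. You have also correctly located the only non-routine step: summing the telescoped logarithms along an orbit so that the bound becomes the finite Dini integral uniformly in $n$; under contraction ratio $r<1$ the change of variables $t\mapsto r^{k}t$ turns the sum $\sum_k \omega_j(r^{k}s)$ into a single integral $\int_0^1 \omega_j(t)\,dt/t$ up to a constant, and this is precisely where Dini (rather than H\"older) continuity enters. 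One small point to tighten: your identification $\lambda=\rho$ should be argued carefully, since a priori the fixed-point eigenvalue need not equal the spectral radius until after the eigenfunction and the convergence $\rho^{-n}B_R^n\mathbf{1}\to h$ are established; in \cite{FL} this comes out at the end rather than at the eigenmeasure stage.
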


\medskip

The proof of the next corollary is analogous to the one in the section Boundary conditions on \cite{CL}.

\medskip
\begin{corollary}\label{approx eigenmeasure fan}
   Under the same hypothesis of the Theorem~\ref{Fan HB Thm} we assume that $\mu$ satisfies $\mathcal{L}_{R}(\mu )= \rho\,  \mu$. Then, we get for any $x_0 \in X$
  $$\lim_{n\to\infty}\frac{B_{R}^{n}(f)(x_0)}{B_{R}^{n}(1)(x_0)}= \int f(x) d\mu(x),$$
  for any $f \in C^0$.
\end{corollary}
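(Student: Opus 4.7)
The plan is to read this corollary off as an immediate consequence of the last uniform-convergence statement in Theorem~\ref{Fan HB Thm}, namely that $\rho^{-n} B_R^n(f_0) \to \big(\int f_0\,d\mu\big)\,h$ for every $f_0 \in C(K)$. First I would apply this to $f_0 = f$ and then, separately, to $f_0 = 1$, producing the two pointwise limits
$$\rho^{-n} B_R^n(f)(x_0) \longrightarrow \Big(\int f\,d\mu\Big)\,h(x_0), \qquad \rho^{-n} B_R^n(1)(x_0) \longrightarrow h(x_0).$$

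Next I would observe that the denominator is nonzero for all large $n$: since $h$ is strictly positive by the uniqueness part of Theorem~\ref{Fan HB Thm}, we have $h(x_0) > 0$, and the second limit above shows that $\rho^{-n} B_R^n(1)(x_0)$ is bounded below by a positive constant once $n$ is large. I may therefore divide the two limits to conclude
$$\frac{B_R^n(f)(x_0)}{B_R^n(1)(x_0)} = \frac{\rho^{-n} B_R^n(f)(x_0)}{\rho^{-n} B_R^n(1)(x_0)} \longrightarrow \frac{\big(\int f\,d\mu\big)\,h(x_0)}{h(x_0)} = \int f\,d\mu,$$
which is precisely the claim.

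The only point requiring care is the normalization of the eigenmeasure. The hypothesis $\mathcal{L}_R\mu=\rho\mu$ determines $\mu$ only up to a positive scalar, and Theorem~\ref{Fan HB Thm} fixes it by the convention $\int h\,d\mu = 1$; for the second displayed limit above I instead used $\int 1\,d\mu = 1$. Since the left-hand side of the corollary is manifestly invariant under rescaling $\mu$, the statement implicitly identifies the correct scaling as the probability normalization $\mu(K)=1$; any other scaling of $\mu$ would produce the same left-hand limit but change the right-hand side by the same constant, so no genuine obstacle arises. With this bookkeeping understood, the proof is just the division of two convergent sequences and is essentially a direct quotation of Theorem~\ref{Fan HB Thm}.
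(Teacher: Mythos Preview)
Your proof is correct and follows essentially the same route as the paper: obtain the two limits $\rho^{-n}B_R^n(f)(x_0)\to h(x_0)\int f\,d\mu$ and $\rho^{-n}B_R^n(1)(x_0)\to h(x_0)$ from Theorem~\ref{Fan HB Thm} and divide. The only cosmetic difference is that the paper derives the numerator limit via the dual statement on measures (taking $\mu_0=\delta_{x_0}$) rather than applying the function-convergence statement directly to $f_0=f$; your handling of the normalization issue is in fact more careful than the paper's, which tacitly assumes $\int 1\,d\mu=1$.
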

\begin{proof}
  First we choose a  point $x_0$. By Theorem~\ref{Fan HB Thm}, for every $f_0 \in C(K)$ we get $\rho^{-n}\, B_{R}^{n}(f_0) \to (\int f d\mu ) h $, and for any $\mu_0 \in M(K)$  we get that $\rho^{-n} \mathcal{L}_{R}^{n}(\mu_0) \to (\int h \, d\mu_0) \mu $.\\
  If $f_0=1$, then $\rho^{-n}\, B_{R}^{n}(1)(x_0) \to (\int 1\, d\mu ) h(x_0)= h(x_0)$. Now, if $\mu_0=\delta_{x_0}$,  then, $\rho^{-n} \mathcal{L}_{R}^{n}(\delta_{x_0}) \to (\int h  \,d\delta_{x_0}) \mu $, or equivalently, for any $f \in C^0$ we have
  $\rho^{-n}  B_{R}^{n}(f)(x_0) \to h(x_0)\,\int f(x) \,d \mu(x)$.

  From this, we can compute the limit
  $$\lim_{n\to\infty}\frac{B_{R}^{n}(f)(x_0)}{B_{R}^{n}(1)(x_0)}=
  \lim_{n\to\infty}\frac{\rho^{-n} B_{R}^{n}(f)(x_0)}{\rho^{-n} B_{R}^{n}(1)(x_0)}=$$
  $$= \frac{h(x_0)\int f(x) d \mu(x)}{h(x_0)}=\int f(x)d  \mu(x).$$
\end{proof}

\medskip
\begin{remark} \label{kr}
The above result shows that we can approximate the eigenprobability $\mu$ by means of the backward iteration of the dynamics of $\sigma$. This method   is analogous to the use of the thermodynamic limit with boundary conditions in order to get Gibbs probabilities in  Statistical Mechanics (see \cite{CL}).
\end{remark}

\medskip

There is an important class of examples leading us to consider IFS.

\begin{example}
In this example, the equivalence relation is $x\sim y \Leftrightarrow x_i=y_i, i \not\in \{n_1, ..., n_r\}$, with $n_1=1$,
where we fixed the set $\{n_1, ..., n_r\} \subset \mathbb{N}.$  This equivalence relation is  continuous with respect to the symbolic structure because the correspondence
$ k*x \stackrel{\psi}{\to}  [k*x]$
is given by
$$\psi_{i_1...i_r}(k*x)=\{y \ | \ y_{i}=(k*x)_i \,,\text{ for }  i \not\in \{n_1, ..., n_r\} \},$$
which is constant with respect to $k$.  Notice that $\sharp [y] =d^r$ for all $y$ and,  in general,  $\sigma(y)\neq x$.

In this case, $\nu^{k*x}$ being the counting measure, the separable Haar-Ruelle operator will be,
$$L_{-\beta V}(f)(x)= \frac{1}{d}\sum_{k=1}^{d} \int_{s \in [k*x]} f(s)\, e^{-\beta V(s)}d \nu^{k*x}(s)=$$ $$=\frac{1}{d}\sum_{k=1}^{d}  \sum_{i_1...i_r=1}^{d} f(\psi_{i_1...i_r}(k*x))\, e^{-\beta V(\psi_{i_1...i_r}(k*x))}= $$
$$\sum_{i_1...i_r=1}^{d} f(\psi_{i_1...i_r}(1*x))\, e^{-\beta V(\psi_{i_1...i_r}(1*x))}.$$
That is, exactly the Hutchinson-Barnsley operator for a contractive IFSw
$$(\Sigma, \psi_{i_1...i_r}(1*x), e^{-\beta V(\psi_{i_1...i_r}(1*x))}).$$
\end{example}

\begin{remark}\label{how to approxim quasi IFS}
The application of the Theorem~\ref{Fan HB Thm} to the IFSw $R=(Y, \tau_j, q_j)$ is immediate, when $\tau_j$ is a contraction and $q_j(x)=e^{-\beta V(\tau_j x)}$, for a H\"older (or, Lipschitz) potential $V$. In this case,  the Haar-Ruelle operator is the Hutchinson-Barnsley operator for a contractive IFSw. Then, the eigenprobability is a quasi-invariant probability for  $V$. It will follow from Theorem \ref{quasi inv sep}  and Theorem  \ref{Fan HB Thm} that the  quasi-invariant probability $M=\mu$ will have support on the attractor $K$. Note that $K$ can be eventually smaller than $X$.

Moreover, by Remark \ref{kr} one can get a computational way to  approximate the integral  $\int f\, d M$.

The bottom line is: the dynamics helps on finding approximations of the quasi-invariant probability  $M$ when $V$ is H\"older.
\end{remark}

Given the separable Haar-Ruelle operator
$$L_{-\beta V}(f)(x)=\frac{1}{d}\sum_{j=1}^{d} \sum_{i=1}^{k} f(\psi_i(j*x))\, e^{-\beta V(\psi_i(j*x))},$$
we consider the  associated IFSw
$$R=(\Sigma, \psi_{i}(j*x), e^{-\beta V(\psi_{i}(j*x))}),$$
where $1\leq j\leq d$ and $1\leq i\leq k$.
Then,
$$d L_{-\beta V}(f)(x)=B_{R}(f)=\sum_{ij} f(\psi_i(j*x))\, e^{-\beta V(\psi_i(j*x))}.$$

\begin{theorem}\label{reduct separable to IFS}
 Suppose that $c(x,y)=V(y)-V(x)$  is a separable cocycle. If the representation $\psi$ of $\sim$ is Lipschitz, with $\max_{ij}Lip(\psi_{i}(j* x)) < 1$, and $V(x)$ is $\alpha$-H\"older, then $M=\mu$ is quasi-invariant, where $\mu$ is the eigenmeasure of $B_{R}^*$ given by the Theorem~\ref{Fan HB Thm}.
\end{theorem}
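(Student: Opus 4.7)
The strategy is to reduce Theorem~\ref{reduct separable to IFS} to a combination of Theorem~\ref{Fan HB Thm} and Theorem~\ref{quasi inv sep}. The key observation is that up to the harmless factor $d$, the separable Haar-Ruelle operator $L_{-\beta V}$ coincides with the Hutchinson-Barnsley operator $B_R$ of the IFSw $R=(\Sigma,\psi_i(j*x),e^{-\beta V(\psi_i(j*x))})$; indeed $B_R = d\, L_{-\beta V}$ by the explicit formula recalled right before the theorem. Thus any eigenmeasure of $B_R^*$ at $\rho$ is automatically an eigenmeasure of $L_{-\beta V}^*$ at $\lambda := \rho/d > 0$.

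First, I would verify that the hypotheses of Theorem~\ref{Fan HB Thm} are met by $R$. The contractivity of the maps $\tau_{ij}(x):=\psi_i(j*x)$ is immediate from the assumption $\max_{ij}\mathrm{Lip}(\psi_i(j*\cdot))<1$. For the Dini continuity of the weights $q_{ij}(x)=e^{-\beta V(\psi_i(j*x))}$, observe that $\ln q_{ij} = -\beta V\circ\tau_{ij}$ is the composition of an $\alpha$-Hölder function with a Lipschitz map, hence $\alpha$-Hölder: there is a constant $C>0$ with $|\ln q_{ij}(x)-\ln q_{ij}(y)|\le C\, d(x,y)^\alpha$. The Dini integral $\int_0^1 t^{-1}\max_{d(x,y)\le t}|\ln q_{ij}(x)-\ln q_{ij}(y)|\,dt \le C\int_0^1 t^{\alpha-1}\,dt = C/\alpha<\infty$ is then finite. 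Theorem~\ref{Fan HB Thm} therefore yields $\mu$ with $B_R^*\mu=\rho\mu$.

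Next, I would translate back to the Haar-Ruelle operator: the relation $B_R=d\,L_{-\beta V}$ gives $L_{-\beta V}^*\mu = (\rho/d)\mu$, so $\mu$ is a positive eigenmeasure of $L_{-\beta V}^*$ with eigenvalue $\lambda=\rho/d>0$. Since $V$ is $\alpha$-Hölder and in particular continuous, Theorem~\ref{quasi inv sep} (equivalently Corollary~\ref{coro}) applies directly and says that such a $\mu$ is quasi-invariant for the separable cocycle $c(x,y)=V(y)-V(x)$. Setting $M=\mu$ finishes the argument.

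The only non-routine step is the verification of the Dini hypothesis, but this is truly mild once one notes that Hölder composed with Lipschitz is Hölder; no extra regularity of the relation $\sim$ is needed beyond what is in the statement. An optional but useful concluding remark (anticipating Remark~\ref{how to approxim quasi IFS}) would be to note that, because $R$ is a genuine contractive IFSw, the support of $M$ lies in the attractor $K\subseteq X$, and Corollary~\ref{approx eigenmeasure fan} provides an explicit backward-iteration scheme for approximating $\int f\,dM$.
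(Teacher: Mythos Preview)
Your proposal is correct and follows essentially the same approach as the paper: verify contractivity and Dini continuity so that Theorem~\ref{Fan HB Thm} applies to the IFSw $R$, then use $L_{-\beta V}=\tfrac{1}{d}B_R$ to transfer the eigenmeasure relation and invoke Theorem~\ref{quasi inv sep}. The only cosmetic difference is that the paper writes out the Dini integral bound explicitly rather than first noting that H\"older composed with Lipschitz is H\"older.
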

\begin{proof}
   We consider the associated IFSw $R=(\Sigma, \psi_{i}(j*x), e^{-\beta V(\psi_{i}(j*x))})$ then the IFS $R=(\Sigma, \psi_{i}(j*x))$ is contractive: $$d( \psi_{i}(j*x),\  \psi_{i}(j*z))  \leq \max_{ij} Lip(\psi_{i}(j*\cdot)) d(x,z).$$
   Note that the weights are Dini continuous:
   $$\int_{0}^{1}\max_{d(x,z)\leq t}\frac{|\ln e^{-\beta V(\psi_{i}(j*x))} - \ln e^{-\beta V(\psi_{i}(j*z))}|}{t}dt=$$
   $$=\,\beta\int_{0}^{1}\max_{d(x,z)\leq t}\frac{| V(\psi_{i}(j*x)) - V(\psi_{i}(j*z))|}{t}dt \leq $$
   $$\leq \beta\int_{0}^{1}\max_{d(x,z)\leq t} \frac{d(\psi_{i}(j*x) ,\ \psi_{i}(j*z))^{\alpha}}{t}dt \leq $$
   $$\leq\beta \max_{ij}Lip(\psi_{i}(j* x))^{\alpha}\int_{0}^{1} t^{\alpha -1}dt \leq$$
   $$\leq\beta \max_{ij}Lip(\psi_{i}(j* x))^{\alpha}  \frac{1}{\alpha}<+\infty.$$
   By Theorem~\ref{Fan HB Thm} there exists a probability $\mu$, such that, $B_{R}^*(\mu)=\rho \mu$.  Since $L_{-\beta V} =\frac{1}{d} B_{R}$ we get
   $$L_{-\beta V}^*(\mu)= \frac{1}{d} B_{R}^* (\mu)=  \frac{1}{d} \rho \mu= \lambda \mu,$$
   where $\lambda:=\frac{\rho}{d}$. By Theorem~\ref{quasi inv sep}, $M=\mu$ is quasi-invariant.
\end{proof}

\begin{example}\label{examp 3}
In this example $X=\{1,2\}^{\mathbb{N}}$ and we will make explicit computations in a case which it is not an IFSw given by the two inverse branches of the shift map. Let $V(x)=V(x_1)=\frac{1}{4}(x_1 -1)^2$ be a  potential depending only on the first coordinate.

We consider the equivalence relation $x\sim y \Leftrightarrow x_k=y_k, k\neq 3$, which is obviously continuous with respect to the symbolic structure because the correspondence
$ j*x \stackrel{\psi}{\to}  [j*x]$
is given by,
$$\psi_{i}(j*x)=\psi_{i}((j, x_1, x_2, x_3,...))=(j, x_1, i, x_3,...),\, 1 \leq i \leq 2,$$
for all $1\leq j \leq 2$.
Notice that $\sharp [y] =2$ for all $y\in X$ and $\sigma(j, x_1, i, x_3,...)=x$ only if $i=x_2$.
The separable Haar-Ruelle operator is
$\displaystyle L_{-\beta V}(f)(x)=\frac{1}{2}\sum_{i,j=1}^{2} f(\psi_i(j*x))\, e^{-\beta V(\psi_i(j*x))}.$
We consider the associated IFSw
$R=(X, \psi_{i}(j*x), e^{-\beta V(\psi_{i}(j*x))}),$
where, $1\leq j \leq 2$ and $1\leq i\leq 2$.
The IFS $R=(X, \psi_{i}(j*x))$ is given by $4$ maps  $\psi_{i}(j*x)$ for $1\leq i, \ j \leq 2$(and, not two) is contractive.
Indeed, notice that
$$d( \psi_{i}(j*x) , \psi_{i}(j*z))
=d((j, x_1, i, x_3,...),\ (j, z_1, i, z_3,...))\leq $$
$$\leq \frac{1}{2} d((x_1, i, x_3,...), \ (z_1, i, z_3,...))
\left\{
  \begin{array}{ll}
    =\frac{1}{2} d(x, \ z), &  x_1\neq z_1 \\
    < \frac{1}{4} d(x, \ z), & x_k=z_k, \ k\leq 2 \\
    =\frac{1}{2} d(x, \ z), & x_k=z_k, \ k\leq 3
  \end{array}
\right.
$$
thus ${\rm Lip}(\psi_{i}(j*x))= \frac{1}{2}$. In particular, $\max_{ij}Lip(\psi_{i}(j* x)) < 1$.  From the definition $V(x)=V(x_1)=\frac{1}{4}(x_1 -1)^2$ then,
$$|V(x) -V(z)|=
\left\{
  \begin{array}{ll}
    0, & x_1=z_1 \\
    \frac{1}{4}, & \text{otherwise}
  \end{array}
\right.
$$
If $ x_1 \neq z_1$ then $d(x,z)=\frac{1}{2}$ and $|V(x) -V(z)|=\frac{1}{4}=d(x,z)^2$. If $x_1=z_1$ then $|V(x) -V(z)|=0 \leq d(x,z)^2$. We conclude that $V(x)$ is $\alpha$-H\"older, for $\alpha=2$. From Theorem~\ref{reduct separable to IFS}, the probability $M=\mu$ is quasi-invariant - taking $\mu$  the eigenmeasure of $B_{R}^*$ given by the Theorem~\ref{Fan HB Thm}  - where $B_{R}^*$ is the dual of
$$B_{R}(f)(x)=2L_{-\beta V}(f)(x)=\sum_{i,j=1}^{2} f(\psi_i(j*x))\, e^{-\beta V(\psi_i(j*x))}.$$

Following  Corollary~\ref{approx eigenmeasure fan} we get that $\mu$ satisfies,  for any $x^0 \in X$,
$$\lim_{n\to\infty}\frac{B_{R}^{n}(f)(x^0)}{B_{R}^{n}(1)(x^0)}= \int f(x) d\mu(x),$$
for any $f \in C^0$.

To make those computations we need to understand how the orbits and the iterates of the operator $B_R$ behaves.  Given $x=(x_1, x_2, x_3, ...) \in X$ we describe its orbit by $x^0=x$, $x^{1}=\psi_{i_{0}}(j_{0}*x^{0})$,  $x^{2}=\psi_{i_{1}}(j_{1}*x^{0})$, ....

More explicitly, we have
$$x^{0}=(x_1,  x_2, x_3, ...), \; x^{1}=(j_{0},x_1,  i_{0}, x_3, ...),$$
$$x^{2}=(j_{1},j_{0}, i_{1},  i_{0}, x_3, ...), \; x^{3}=(j_{2},j_{1}, i_{2},  i_{1},  i_{0}, x_3, ...), \;\text{ etc.}$$
Therefore, for each $n$ we get $x^{n}=(j_{n-1},j_{n-2},i_{n-1}, ...,  i_{2},  i_{1},  i_{0}, x_3, ...).$

From this, we can write
$$B_{R}(f)(x)=\sum_{i_1,j_1=1}^{2} f(\psi_{i_{1}}(j_{1}*x))\, e^{-\beta V(\psi_{i_{1}}(j_{1}*x))}$$
$$B_{R}^2(f)(x)=\sum_{i_0,j_0=1}^{2} B_{R}(f)(\psi_{i_{0}}(j_{0}*x))\, e^{-\beta V(\psi_{i_{0}}(j_{0}*x))}=$$
$$= \sum_{i_0,i_1,j_0,j_1=1}^{2}   f(\psi_{i_{1}}(j_{1}*(\psi_{i_{0}}(j_{0}*x))))\, e^{-\beta V(\psi_{i_{1}}(j_{1}*(\psi_{i_{0}}(j_{0}*x))))}  \, e^{-\beta V(\psi_{i_{0}}(j_{0}*x))}=$$
$$= \sum_{i_0,i_1,j_0,j_1=1}^{2}   f(x^2)\, e^{-\beta (V(x^2) +  V(x^1))}= \sum_{i_0,i_1,j_0,j_1=1}^{2}   f(x^2) \, e^{-\beta (V(j_{1}) +  V(j_{0}))},$$
and the $n$-th power will be
$$B_{R}^n(f)(x)= \sum_{i_{0}, ...,i_{n-1}, j_{0}, ...,j_{n-1}=1}^{2}   f(x^n) \, e^{-\beta (V(j_{n-1}) + \cdots + V(j_{0}))}.$$

In order to make a histogram of $\mu$ we fix a length  $k\geq 2$ to built a partition $X=\bigcup \overline{a_1,..., a_k}$, and compute
$$ u(t)= \mu(\overline{a_1,..., a_k})=\int \chi_{_{\overline{a_1,..., a_k}}}(x) d\mu(x)\simeq \frac{B_{R}^{n}(f)(x^0)}{B_{R}^{n}(1)(x^0)}.$$

We plot this value at the point $t= 2^{-2}a_1+ \cdots + 2^{-k-1}a_k \in [0,1]$. In this way, each point in the histogram correspond to the measure of the associated element of the partition. We  choose $x^0=(1,1,1,1,1, ....)$ for simplicity.
\begin{figure}[!ht]  \centering
\includegraphics[width=4.2cm]{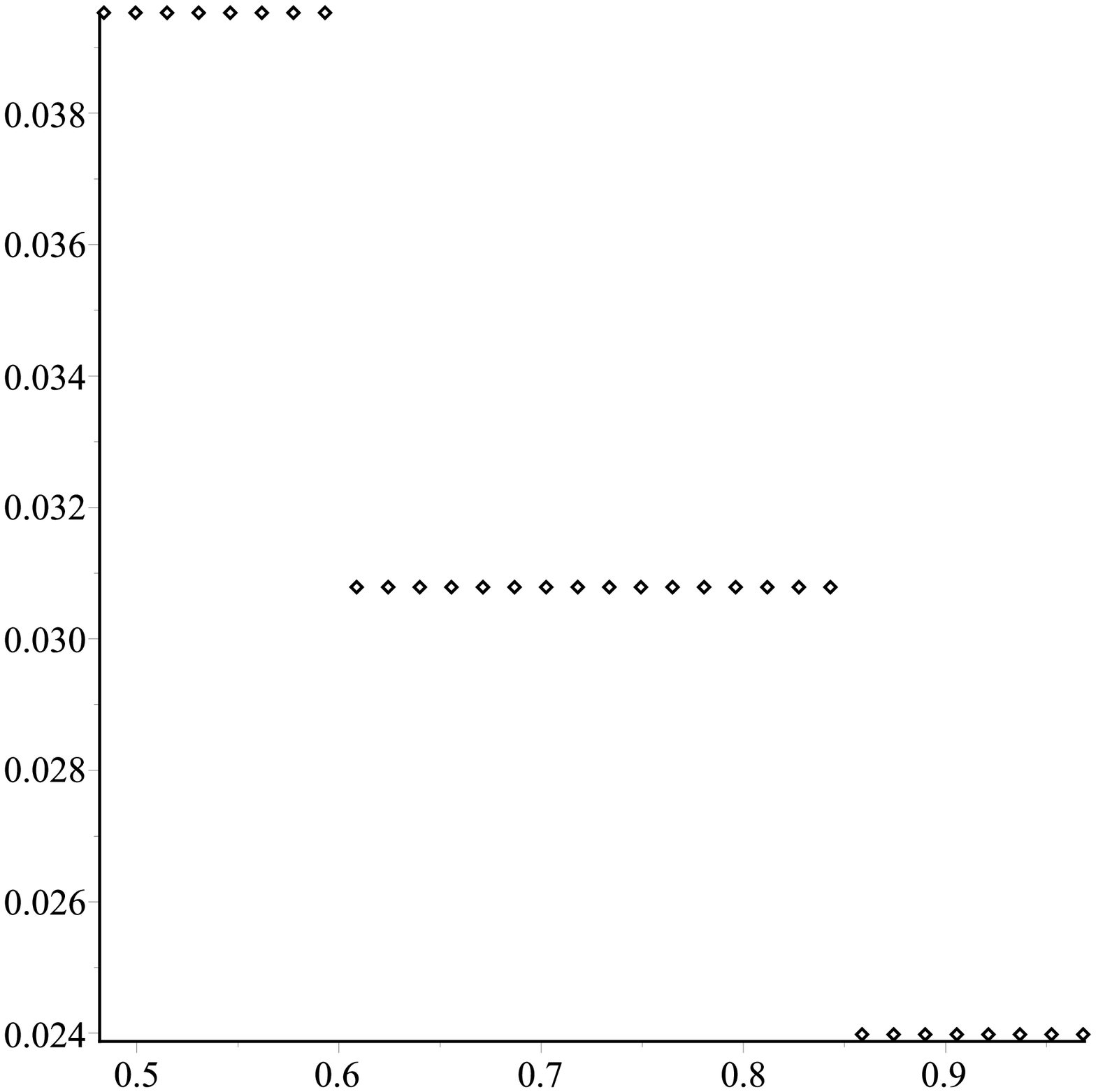}
\includegraphics[width=4.2cm]{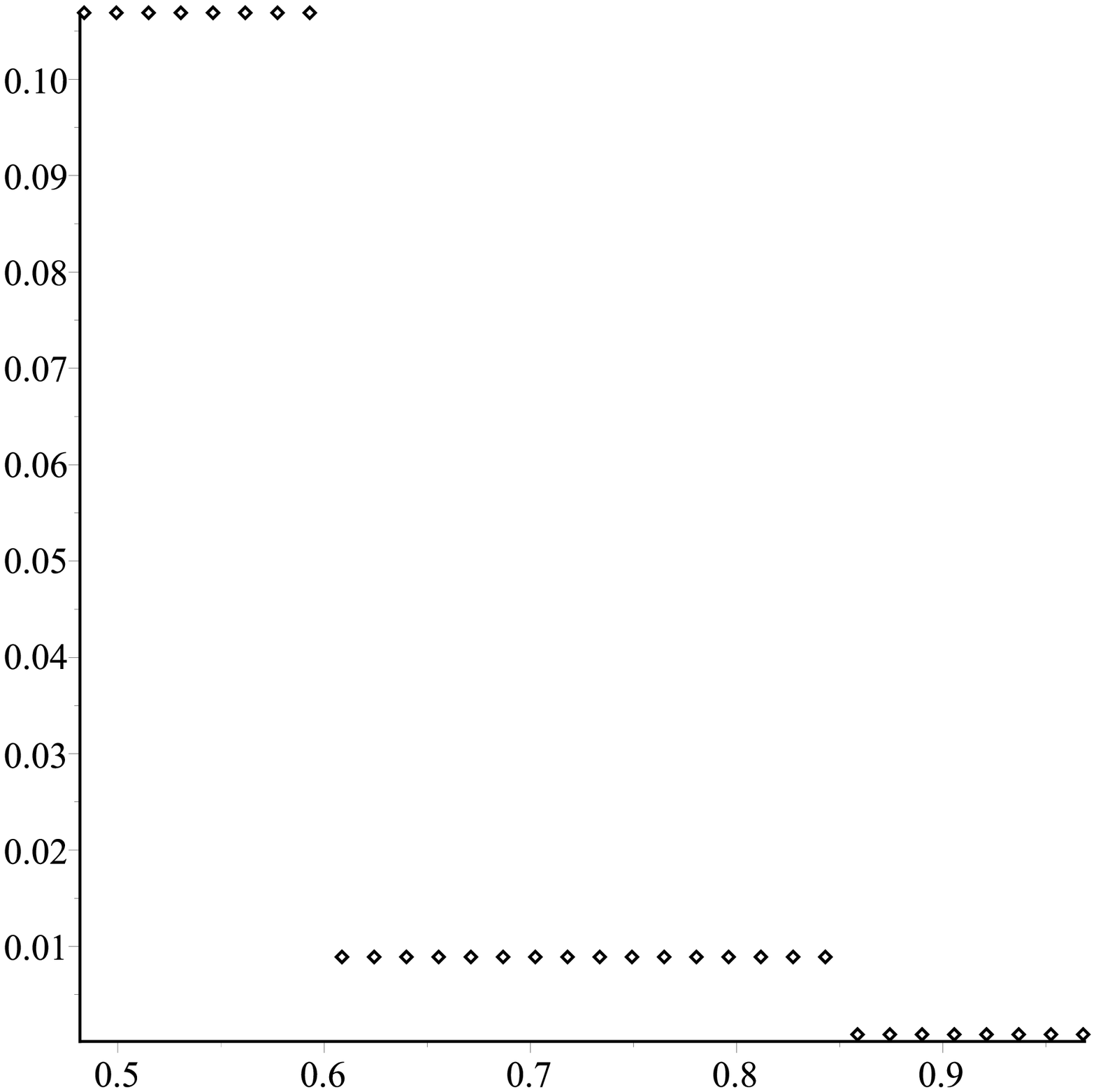}
\includegraphics[width=4.2cm]{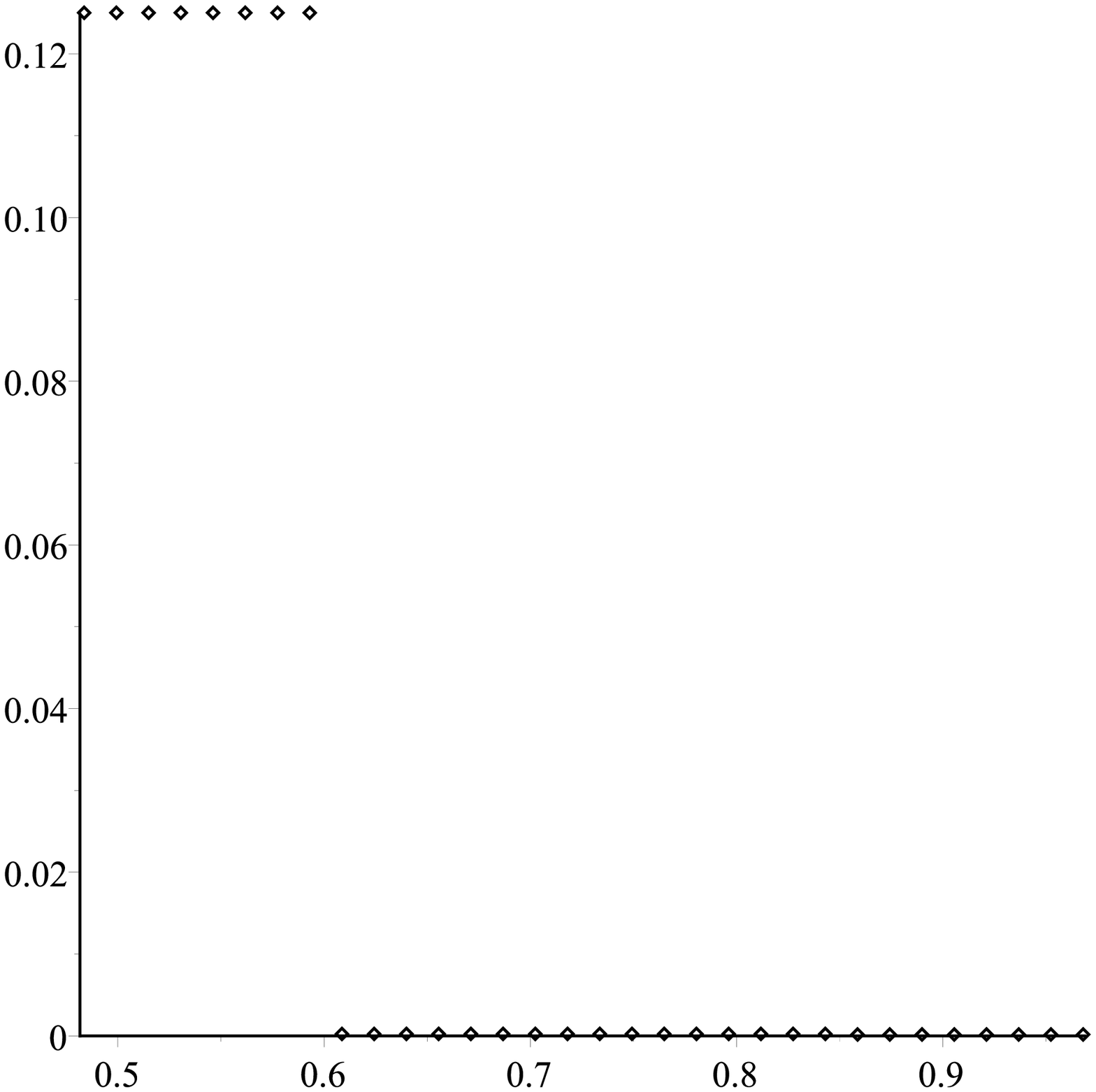}
\caption{Computation of $v(t)$ for  $k=5$ and $n=9$. In this picture we consider $\beta=1$(left), $\beta=10$(middle) and $\beta=30$(right)} \label{fig rel 3 histogram var beta} \end{figure}

In the Figure~\ref{fig rel 3 histogram var beta} we can see an approximation of the measure $\mu$ which is the only eigenmeasure of $B_{R}^{*}$ for three different values of $\beta$:  $\beta=1$, $\beta=10$ and $\beta=30$. For bigger values of $\beta$ the measure is concentrated close to the smaller values of $t$. By our representation the value $t= 2^{-2}a_1+ \cdots +2^{-6}a_5 $ corresponds to the cylinders $(\overline{1,1,1,1,1}$), $(\overline{1,1,1,1,2})$,...,$(\overline{1,1,2,2,1})$ and  $(\overline{1,1,2,2,2})$.
\end{example}

\section{The Haar Operator}
We already proved that certain eigenmeasures of the Haar-Ruelle operators are quasi-invariant measures. In this section we are going to consider necessary conditions on the quasi-invariant measure. Our goal is to show that any quasi-invariant measure is an eigenmeasure of some Haar operator. We notice that this operator is quite different from the Haar-Ruelle operator.

\begin{definition}\label{haar op}
We introduce the Haar operator, $H_{-\beta c}$ defined by
  $$H_{-\beta c}(f)(x)= \frac{1}{\nu^{x}([x])}\int_{s \in [x]} f(s)\, e^{-\beta c(x,s)}d \nu^{x}(s),$$
acting on  any integrable $f:X \to \mathbb{R}$.
\end{definition}

\begin{proposition}\label{haar op properties}
  We claim that\\
  a) $H_{-\beta c} : C^0 \to C^0$.\\
  b) $H_{-\beta c}^2= H_{-\beta c}$, in particular $V=H_{-\beta c}(f)$ for a given $f:X \to \mathbb{R}$ is a fixed point, that is, $H_{-\beta c}(V)=V$.\\
  c) $H_{-\beta c}$ is positive.\\
  d) $H_{-\beta c + b}(1)=1$, for $b(x,y)=V(y)-V(x)$, where $V=\ln H_{-\beta c}(f)$, for some $f:X \to \mathbb{R}^{+}$.
\end{proposition}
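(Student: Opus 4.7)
The plan is to treat the four claims in turn, using the counting-measure expression
\[
H_{-\beta c}(f)(x)=\frac{1}{k(x)}\sum_{t=1}^{k(x)}f(\psi_t(x))\,e^{-\beta c(x,\psi_t(x))},
\]
where $k(x)=\#[x]$ and $\psi_1(x),\ldots,\psi_{k(x)}(x)$ is the ordered representation of $[x]$ supplied by the continuous groupoid structure.

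For (a), on each region where $k$ is locally constant the operator reduces to a finite sum of compositions of the continuous selectors $\psi_t$, the continuous cocycle $c$ and the continuous $f$; the continuous groupoid hypothesis on $\psi$ is exactly what is needed. Part (c) is immediate, since every summand is non-negative whenever $f\ge 0$.

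The heart of the proposition is the idempotence in (b). I would expand
\[
H_{-\beta c}^{2}(f)(x)=\frac{1}{k(x)}\sum_{t=1}^{k(x)}\frac{1}{k(\psi_t(x))}\sum_{s=1}^{k(\psi_t(x))} f(\psi_s(\psi_t(x)))\,e^{-\beta c(\psi_t(x),\psi_s(\psi_t(x)))}\,e^{-\beta c(x,\psi_t(x))},
\]
and then apply three collapses: (i) since $\psi_t(x)\sim x$ one has $[\psi_t(x)]=[x]$, hence $k(\psi_t(x))=k(x)$; (ii) the absorption property of Proposition~\ref{order relat struct} yields $\psi_s(\psi_t(x))=\psi_s(x)$; (iii) the cocycle identity $c(x,\psi_s(x))=c(x,\psi_t(x))+c(\psi_t(x),\psi_s(x))$ merges the two exponential factors into $e^{-\beta c(x,\psi_s(x))}$. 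After these simplifications the inner expression no longer depends on $t$, so the outer sum over $t$ just contributes a factor $k(x)$ which cancels one of the two $1/k(x)$ factors, leaving $H_{-\beta c}(f)(x)$. The fixed-point consequence $H_{-\beta c}(V)=V$ for $V=H_{-\beta c}(f)$ is then immediate.

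For (d), I would use (b) as a black box. Setting $V=\ln H_{-\beta c}(f)$, so that $e^{V}=H_{-\beta c}(f)$, a direct substitution gives
\[
H_{-\beta c+b}(1)(x)=\frac{1}{k(x)}\sum_{t=1}^{k(x)}e^{-\beta c(x,\psi_t(x))+V(\psi_t(x))-V(x)}=e^{-V(x)}\,H_{-\beta c}(e^{V})(x),
\]
and by (b), $H_{-\beta c}(e^{V})=H_{-\beta c}^{2}(f)=H_{-\beta c}(f)=e^{V}$, so the right-hand side equals $1$. The main obstacle is the bookkeeping in (b): one must be confident that the \emph{ordered} representation of classes is consistent enough that $\psi_s(\psi_t(x))$ is literally $\psi_s(x)$ with the same index labelling the same element, which is precisely what Proposition~\ref{order relat struct} guarantees. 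Once the absorption and cocycle identities are combined correctly, all the remaining items follow cleanly.
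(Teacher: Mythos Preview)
Your proposal is correct and follows essentially the same route as the paper. The only cosmetic difference is that you write the Haar operator in the discrete counting-measure form with the selectors $\psi_t$ and invoke the absorption property of Proposition~\ref{order relat struct} explicitly, whereas the paper keeps the integral notation $\frac{1}{\nu^{x}([x])}\int_{[x]}\cdots\,d\nu^{x}$ and uses directly that $[s]=[x]$, $\nu^{s}=\nu^{x}$ for $s\sim x$ together with the cocycle identity $c(x,t)=c(x,s)+c(s,t)$; the two formulations are equivalent and the reductions in parts (b) and (d) are identical in content.
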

\begin{proof}
  a) $H_{-\beta c} : C^0 \to C^0$ because the map
$$x \to [x]:=\{s \in X | s \sim x\}$$
is continuous as a set function.\\\\
b) Let $V=H_{-\beta c}(f)$ for some $f:X \to \mathbb{R}$,  then,
$$ H_{-\beta c}(V)(x)= \frac{1}{\nu^{x}([x])}\int_{s \in [x]} V(s)\, e^{-\beta c(x,s)}d \nu^{x}(s)=$$
$$= \frac{1}{\nu^{x}([x])}\int_{s \in [x]} \frac{1}{\nu^{s}([s])}\int_{t \in [s]} f(t)\, e^{-\beta c(s,t)}d \nu^{s}(t)\, e^{-\beta c(x,s)}d \nu^{x}(s)=$$
$$=\frac{1}{\nu^{x}([x])}\int_{t \in [x]} f(t)\, e^{-\beta c(x,t)}d \nu^{x}(t)= V(x),$$
because $c(x,  t)=c(x,  s)+c(s,  t), \forall x\sim s\sim t$.\\\\
c) It is obvious.\\\\
d) Consider $V=\ln H_{-\beta c}(f)$, for any $f:X \to \mathbb{R}^{+}$, and $b(x,y)=V(y)-V(x)$, then,
$$H_{-\beta c + b}(1)= \frac{1}{\nu^{x}([x])}\int_{s \in [x]} 1 \, e^{-\beta c(x,s)+ b(x,s)}d \nu^{x}(s)= $$
$$= \frac{1}{\nu^{x}([x])}\int_{s \in [x]} 1 \, e^{-\beta c(x,s)+ V(s)- V(x)}d \nu^{x}(s)= $$ $$=\frac{e^{-V(x)}}{\nu^{x}([x])}\int_{s \in [x]} e^{V(s)} \, e^{-\beta c(x,s)}d \nu^{x}(s)= \frac{H_{-\beta c}(e^{V})(x)}{e^{V(x)}}=1,$$
because $H_{-\beta c}(e^{V})= e^{V}$.
\end{proof}

From the previous result we can establish a standard normalization. Since $H_{-\beta c}(1)(x)>0$ we have that $V(x)=\ln H_{-\beta c}(1)(x)$ and
$$H_{-\beta c + b}(f)(x)=\frac{1}{\nu^{x}([x])}\int_{s \in [x]} f(s) \, e^{-\beta c(x,s)+ \ln H_{-\beta c}(1)(s)- \ln H_{-\beta c}(1)(x)}d \nu^{x}(s)$$
satisfies $H_{-\beta c + b}(1)=1$, where
$$V(x)=\ln H_{-\beta c}(1)(x)= \frac{1}{\nu^{x}([x])}\int_{s \in [x]}  e^{-\beta c(x,s)}d \nu^{x}(s).$$

This kind of normalization is analogous, in some sense, to the one presented in \cite{PP}  or in \cite{LMMS}.

\section{Characterizing quasi-invariant probabilities}

Suppose that $M$ is a quasi-invariant probability for $\beta\in \mathbb{R}$ and $c:G\rightarrow \mathbb{R}$, that is, for  any $h$
$$\int\int h(s,\ x)d\nu^{x}(s)dM(x)=\int\int h(x,\ s)e^{-\beta c(x,s)}d\nu^{x}(s)dM(x).$$

Since $M$ is a probability in $G^0$, it will be completely determined by its action on $C^0$.

\begin{proposition} \label{pro}
   If $M$ is a quasi-invariant probability for $\beta\in \mathbb{R}$ and $c:G\rightarrow \mathbb{R}$, then
   $$\int_{G^0} f(x) dM(x)=\int_{G^0} H_{-\beta c}(f) (x)dM(x),$$
   for all $f \in C^0$.
\end{proposition}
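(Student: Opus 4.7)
The strategy is to apply the quasi-invariance identity~(\ref{forti}) to a carefully chosen test function $h:G\to \mathbb{R}$ built from $f$, engineered so that the right-hand side of~(\ref{forti}) reproduces $\int H_{-\beta c}(f)\,dM$ while the left-hand side collapses to $\int f\,dM$. Concretely, I would define
$$h(u,v):=\frac{f(v)}{\nu^{u}([u])},$$
under the standing assumption (implicit in the definition of $H_{-\beta c}$) that $\nu^{x}([x])$ is finite, positive, and measurable as a function of $x$.

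With this choice,
$$h(x,s)\,e^{-\beta c(x,s)}=\frac{f(s)\,e^{-\beta c(x,s)}}{\nu^{x}([x])},$$
so integrating first against $d\nu^{x}(s)$ and then against $dM(x)$ yields exactly $\int H_{-\beta c}(f)(x)\,dM(x)$; this takes care of the right-hand side of~(\ref{forti}). For the left-hand side we evaluate $h(s,x)=f(x)/\nu^{s}([s])$. The key observation is that in the inner integral $s$ ranges over $\supp \nu^{x}=[x]$, so $s\sim x$; by property~(3) of the transverse function $\nu^{s}=\nu^{x}$, hence $\nu^{s}([s])=\nu^{x}([x])$ for all $s$ in the range of integration. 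The inner integral therefore reduces to $\frac{f(x)}{\nu^{x}([x])}\,\nu^{x}([x])=f(x)$, and the left-hand side of~(\ref{forti}) collapses to $\int f(x)\,dM(x)$. Comparing the two sides gives the claimed equality.

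The only real obstacle is bookkeeping: one must justify that the chosen $h$ is integrable with respect to both $\nu^{x}$ and $M$ (it is, since $f$ is bounded on the compact space $X$ and $1/\nu^{x}([x])$ inherits measurability from the Haar system) and one must invoke $\nu^{s}([s])=\nu^{x}([x])$ only on $\supp \nu^{x}$, where it is valid. Once these points are checked the argument is a one-line substitution into~(\ref{forti}).
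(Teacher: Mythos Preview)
Your proof is correct and follows exactly the same approach as the paper: both choose the test function $h(u,v)=f(v)/\nu^{u}([u])$ and substitute it into the quasi-invariance identity, using $\nu^{s}([s])=\nu^{x}([x])$ for $s\in[x]$ to collapse the left-hand side. Your write-up is in fact more careful than the paper's, since you make explicit the use of property~(3) of the transverse function and address integrability, both of which the paper leaves implicit.
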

\begin{proof}
  Consider $f \in C^0$ and define the integrable function
  $$h(x,y)=\frac{f(s)}{\nu^{x}([x])},$$ then,
  $$\int_{G^0}\int_{[x]}  \frac{f(x)}{\nu^{s}([s])} d \nu^{x}(s)dM(x)=\int_{G^0}\int_{[x]} \frac{f(s)}{\nu^{x}([x])} e^{-\beta c(x,s)} d \nu^{x}(s)dM(x)$$
  $$\int_{G^0} f(x) dM(x)=\int_{G^0}\frac{1}{\nu^{x}([x])}\int_{[x]} f(s) e^{-\beta c(x,s)} d \nu^{x}(s)dM(x)$$
\end{proof}

\begin{proposition}
  Consider $V(x)=\ln H_{-\beta c}(1)(x)$ and the normalization
$$H_{-\beta c + b}(f)(x)=\frac{1}{\nu^{x}([x])}\int_{s \in [x]} f(s) \, e^{-\beta c(x,s)+ V(s)- V(x)} d \nu^{x}(s),$$
where
$\displaystyle V(x)=\ln H_{-\beta c}(1)(x)= \frac{1}{\nu^{x}([x])}\int_{s \in [x]}  e^{-\beta c(x,s)}d \nu^{x}(s).$ Then, $e^{V}dM=dM^*$ is an eigenmeasure of $H_{-\beta c + b}^{*}$. Reciprocally, if $H_{-\beta c + b}^{*}(M^*)=M^*$, then $dM=e^{-V}dM^*$ satisfies $$\int_{G^0} f(x) dM(x)=\int_{G^0} H_{-\beta c}(f) (x)dM(x),$$
   for all $f \in C^0$.

\end{proposition}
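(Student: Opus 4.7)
The plan is to reduce everything to a single algebraic identity expressing the normalized Haar operator as a conjugation of the original Haar operator by $e^{V}$, and then to combine this identity with Proposition~\ref{pro} (which characterizes quasi-invariance of $M$ as the statement $\int f\,dM=\int H_{-\beta c}(f)\,dM$).

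First I would establish the conjugation identity
$$H_{-\beta c + b}(g)(x)=e^{-V(x)}\,H_{-\beta c}(e^{V}g)(x)\qquad\text{for all }g\in C^{0}.$$
This follows directly from the definition of $H_{-\beta c+b}$ by pulling out the factor $e^{-V(x)}$ and absorbing $e^{V(s)}$ inside the integrand, using that $b(x,s)=V(s)-V(x)$. This is the only computational step; it is short and routine.

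For the forward direction, assume $M$ is quasi-invariant so that Proposition~\ref{pro} applies. Given $g\in C^{0}$, I would compute
$$\int H_{-\beta c + b}(g)\,dM^{*}=\int H_{-\beta c + b}(g)\,e^{V}\,dM=\int e^{-V}H_{-\beta c}(e^{V}g)\,e^{V}\,dM=\int H_{-\beta c}(e^{V}g)\,dM,$$
using the conjugation identity. Then Proposition~\ref{pro} applied to the continuous function $f:=e^{V}g$ gives the final equality $\int H_{-\beta c}(e^{V}g)\,dM=\int e^{V}g\,dM=\int g\,dM^{*}$, which is exactly $H_{-\beta c + b}^{*}M^{*}=M^{*}$.

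For the converse, assume $H_{-\beta c+b}^{*}(M^{*})=M^{*}$ and set $dM=e^{-V}dM^{*}$. For $f\in C^{0}$, applying the conjugation identity in the form $H_{-\beta c}(f)=e^{V}H_{-\beta c+b}(e^{-V}f)$ yields
$$\int H_{-\beta c}(f)\,dM=\int e^{V}H_{-\beta c + b}(e^{-V}f)\,e^{-V}\,dM^{*}=\int H_{-\beta c + b}(e^{-V}f)\,dM^{*}=\int e^{-V}f\,dM^{*}=\int f\,dM,$$
where the third equality uses the eigenmeasure hypothesis applied to $g=e^{-V}f\in C^{0}$. This is the desired identity from Proposition~\ref{pro}. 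No real obstacle is expected: the content is entirely contained in the conjugation identity, and the rest is substitution; the one thing to be careful about is that $V$ is continuous and strictly real-valued so that $e^{\pm V}$ is a legitimate multiplier on $C^{0}$, which is guaranteed by Proposition~\ref{haar op properties}(a) together with $H_{-\beta c}(1)>0$.
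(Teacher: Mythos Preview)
Your proposal is correct and follows essentially the same approach as the paper: both arguments rest on the conjugation identity $H_{-\beta c+b}(g)=e^{-V}H_{-\beta c}(e^{V}g)$ combined with the characterization $\int f\,dM=\int H_{-\beta c}(f)\,dM$ from Proposition~\ref{pro}, with the substitution $f=e^{V}g$. The only difference is presentational --- you isolate the conjugation identity as a separate preliminary step, whereas the paper performs it in-line within a single chain of equalities and then notes that the converse follows by reversing the computation.
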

\begin{proof}
  Given $e^{V}dM=dM^*$ and $f \in C^0$, we define $g = f  e^V$, then
  $$\int_{G^0} g(x)  dM(x)= \int_{G^0} H_{-\beta c}(g)(x)  dM(x)$$
  $$\int_{G^0} f(x)e^{V(x)}  dM(x)= \int_{G^0} H_{-\beta c}(f  e^V)(x)  dM(x)$$
  $$\int_{G^0} f(x)   dM^*(x)= \int_{G^0} H_{-\beta c}(f  e^V)(x)e^{-V(x)} e^{V(x)} dM(x)$$
  $$\int_{G^0} f(x)   dM^*(x)= \int_{G^0} \frac{1}{\nu^{x}([x])}\int_{[x]}f(s) e^{V(s)} e^{-\beta c(x,s)} \nu^{x}(ds) e^{-V(x)}  dM^*(x)$$
  $$\int_{G^0} f(x)   dM^*(x)= \int_{G^0} H_{-\beta c + b}(f)(x)   dM^*(x).$$
  The reciprocal is true because we can reverse the previous argument.
\end{proof}

\medskip

A. O. Lopes

arturoscar.lopes@gmail.com

partially supported by CNPq

\medskip

E. R. Oliveira

elismar.oliveira@ufrgs.br
\medskip

Instituto de Matematica  e Estatistica - UFRGS

\medskip

\end{document}